\newcommand{\menote}[1]{\marginpar{\color{blue}\tiny [MLE] #1}}
\numberwithin{equation}{section}
\numberwithin{equation}{section}
\numberwithin{figure}{section}
\theoremstyle{plain}
\newtheorem{thm}{Theorem}[section]
\newtheorem{conjecture}[thm]{Conjecture}
\newtheorem{prop}[thm]{Proposition}
\newtheorem{cor}[thm]{Corollary}
\newtheorem{lem}[thm]{Lemma}
\theoremstyle{remark}
\newtheorem{claim}{Claim}
\newtheorem{rem}[thm]{Remark}
\newtheorem{remarks}[thm]{Remarks}
\newcommand{\Lam}{\Lambda}
\newcommand{\eps}{\epsilon}
\newcommand{\cA}{\mathcal{A}}
\newcommand{\cC}{\mathcal{C}}
\newcommand{\cH}{\mathcal{H}}
\newcommand{\cO}{\mathcal{O}}
\newcommand{\cP}{\mathcal{P}}
\newcommand{\cX}{\mathcal{X}}
\newcommand{\cY}{\mathcal{Y}}
\newcommand{\bA}{\mathbb{A}}
\newcommand{\bD}{\mathbb{D}}
\newcommand{\bG}{\mathbb{G}}
\newcommand{\bL}{\mathbb{L}}
\newcommand{\bP}{\mathbb{P}}
\newcommand{\bR}{\mathbb{R}}
\newcommand{\bZ}{\mathbb{Z}}
\newcommand{\bQ}{\mathbb{Q}}
\newcommand{\bF}{\mathbb{F}}
\newcommand{\bN}{\mathbb{N}}
\newcommand{\bH}{\mathbb{H}}
\newcommand{\bS}{\mathbb{S}}
\newcommand{\SL}{\operatorname{SL}}
\newcommand{\defi}{\overset{\on{def}}{=}}
\newcommand\norm[1]{\left\|#1\right\|}
\newcommand\set[1]{\left\{#1\right\}}
\newcommand\pa[1]{\left(#1\right)}
\newcommand\av[1]{\left|#1\right|}
\newcommand\on[1]{\operatorname{#1}}
\newcommand\mb[1]{\mathbf{#1}}
\newcommand\br[1]{\left[#1\right]}
\newcommand\smallmat[1]{\pa{\begin{smallmatrix}#1\end{smallmatrix}}}
\newcommand{\lra}{\longrightarrow}
\newcommand{\ra}{\rightarrow}
\newcommand{\onto}{\xymatrix{\ar@{>>}[r]&}}
\newcommand{\da}[4]{\xymatrix{#1 \ar@<.5ex>[r]^{#2} \ar@<-.5ex>[r]_{#3} & #4}}
\newif\ifdraft\drafttrue
\begin{document}

\title[Integer points and their orthogonal lattices]{Integer points on Spheres and their orthogonal lattices }

\author[]{}

\author[M. Aka]{Menny Aka}

\author[M. Einsiedler]{Manfred Einsiedler}

\author[U. Shapira]{Uri Shapira\\
(with an appendix by Ruixiang Zhang)}

\begin{abstract}
Linnik proved in the late 1950's the equidistribution of integer points on large spheres under a congruence condition. The congruence condition was
lifted in 1988 by Duke (building on a break-through by Iwaniec) using completely different techniques.
We conjecture that this equidistribution result also extends to the pairs consisting of a  vector on the sphere and the shape of the lattice
in its orthogonal complement.  We use a joining result for higher rank diagonalizable actions to obtain this conjecture
under an additional congruence condition.
\end{abstract}

\address{M.A. Departement Mathematik\\
ETH Z\"urich\\
R\"amistrasse 101\\
8092 Zurich\\
Switzerland}

\email{menashe-hai.akka@math.ethz.ch}
\thanks{M.A. acknowledges the support of ISEF, Advanced Research Grant
228304 from the ERC, and SNF Grant 200021-152819. }

\address{M.E. Departement Mathematik\\
ETH Z\"urich\\
R\"amistrasse 101\\
8092 Zurich\\
Switzerland}

\email{manfred.einsiedler@math.ethz.ch}
\thanks{M.E. acknowledges the support of the SNF Grant 200021-127145 and 200021-152819. }

\address{U.S. Department of Mathematics\\
Technion \\
Haifa \\
Israel }

\email{ushapira@tx.technion.ac.il}
\thanks{U.S. acknowledges the support of the Chaya fellowship and ISF grant 357/13.}

\address{R.Z. Departement of Mathematics\\
Princeton University\\
Fine Hall, Washington Road\\
Princeton, NJ USA 08544}
\email{ruixiang@math.princeton.edu}
\thanks{R.Z. acknowledges the support of Math Department of Princeton University}

\maketitle

\section{Introduction}

A theorem of Legendre, whose complete proof was given by Gauss in
\cite{Gauss1801}, asserts that an integer $D$ can be written as
a sum of three squares if and only if $D$ is not of the form $4^{m}(8k+7)$
for some $m,k\in\bN$. Let $\bD=\set{D\in\bN:D\not\equiv0,4,7\mod8}$
and $\bZ_{{\rm prim}}^{3}$ be the set of primitive vectors in $\bZ^{3}$.
Legendre's Theorem also implies that the set
\[
\bS^{2}(D)\defi\set{v\in\bZ_{\textrm{prim}}^{3}:\norm{v}_{2}^{2}=D}
\]
is non-empty if and only if $D\in\bD$. This important result has been refined in many ways. We are interested in the refinement
known as Linnik's problem.

Let $\bS^{2}\defi\set{x\in\bR^{3}:\norm{x}_{2}=\text{1}}$.
For a subset $S$ of rational odd primes we set
\[
\bD(S)=\set{D\in\bD:\text{for all }p\in S,\,-D{\rm \ mod\,}p\in\pa{\bF_{p}^{\times}}^{2}}.
\]
In the late 1950's Linnik \cite{Linnik68} proved that $\set{\frac{v}{{\normalcolor \norm{v}}}:v\in\bS^{2}(D)}$
equidistribute to the uniform measure on $\bS^{2}$ when $D\ra\infty$
under the restrictive assumption $D\in\bD(p)$ where $p$ is an odd prime. As we will again recall in this paper
(see equation (\ref{eq: split condition})) the condition $D\in\bD(p)$
should be thought of as a splitting condition for an associated torus
subgroup over $\bQ_{p}$, which enables one to use dynamical arguments.
Assuming GRH Linnik was able to remove the congruence condition.
A full solution of Linnik's problem 
was given by Duke \cite{Duke88} (following a breakthrough by
Iwaniec \cite{Iwaniec87}), who used entirely different methods.

In this paper we concern ourself not just with the direction of the vector~$v\in\bS^2(D)$
but also with the shape of the lattice~$\Lambda_v\defi\bZ^3\cap v^\perp$ in the orthogonal complement~$v^\perp$.
To discuss this refinement in greater detail we introduce the following notation. Fix
a copy of $\bR^{2}\defi\bR^{2}\times\set{0}$ in $\bR^{3}$.
To any
primitive vector $v\in\bS^{2}(D)$ we attach
an orthogonal lattice $[\Lambda_{v}]$ and an orthogonal grid $[\Delta_{v}]$ in~$\bR^2$
by the following procedure.

First, note that
\begin{equation}
[\bZ^{3}:\pa{\bZ v\oplus\Lambda_{v}}]=D\label{eq:covolume-1}
\end{equation}
since primitivity of $v$ implies that the homomorphism $\bZ^{3}\ra\bZ$
defined by $u\mapsto(u,v)$ is surjective and $v\oplus\Lambda_{v}$
is the preimage of $D\bZ$. Now we choose an
orthogonal transformation $k_{v}$ in ${\rm SO}_{3}(\bR)$ that maps
$v$ to $\norm{v}e_{3}$ and so maps $v^{\perp}$ to our fixed copy
of $\bR^{2}$. We rotate~$\bZ^3\cap v^\perp$ by~$k_v$ and obtain
a lattice in~$\bR^2$, which has covolume~$\sqrt{D}$ by  \eqref{eq:covolume-1}.
In order to normalize the covolume we also
multiply by the diagonal matrix $a_{v}={\rm diag}(D^{-\frac{1}{4}},D^{-\frac{1}{4}},D^{\frac{1}{2}})$.
This defines a unimodular lattice~$[\Lambda_v]$ in~$\bR^2$, which is well defined
up to planar rotations and so defines an element
\[
[\Lambda_v]\in\cX_{2}\defi{\rm SO_{2}(\bR)\setminus SL_{2}(\bR)/SL_{2}(\bZ).}
\]
We will refer to $[\Lambda_{v}]$ as ``the shape of
the orthogonal lattice'' attached to $v$.

We may still obtain a bit more geometric information from the given vector~$v$ as follows.
We choose a basis $v_{1},v_{2}$ of the
lattice $\Lambda_{v}$ such that $\det(v_{1},v_{2},v)>0$.
Choose $w\in\bZ^{3}$ with $(w,v)=1$
and let $g_{v}$ denote the matrix whose columns are $v_{1},v_{2},w$.
Note that $g_{v}\in{\rm SL}_{3}(\bZ)$ and that the set of choices
of $g_{v}$ is the coset $g_{v}{\rm ASL}_{2}(\bZ)$,
 where~${\rm ASL}_2=\set{\smallmat{g&*\\0&1}\vert g\in{\rm SL}_2}$.
Also note that the set of choices for $k_{v}$ is
the coset ${\rm Stab}_{{\rm SO}_{3}(\bR)}(e_{3})k_{v}={\rm SO}_{2}(\bR)k_{v}$.
As $a_{v}$ commutes with ${\rm SO}_2(\bR)$, we obtain the double coset
\[
[\Delta_{v}]={\rm SO}_{2}(\bR)a_{v}k_{v}g_{v}{\rm ASL}_{2}(\bZ).
\]
It does not depend on the choices made above and belongs to the space
\[
\cY_{2}\defi{\rm SO_{2}(\bR)\setminus ASL_{2}(\bR)/ASL_{2}(\bZ)},
\]
where we used that~$a_vk_vg_v\in{\rm ASL}_2(\bR)$.
Elements of the form~$[\Delta_v]$ will be refered to as ``orthogonal grids''
and can be identified with two-dimensional lattices together with a marked point on the associated torus,
defined up to a rotation.

Let $\tilde{\nu}_{D}$ denote the normalized counting measure on the
set
\[
\set{\pa{\frac{v}{\norm{v}},[\Delta_{v}]}:v\in\bS^{2}(D)}\subset\bS^{2}\times\cY_{2}.
\]
We are interested to find $A\subset\bD$ for which
\begin{equation}
\tilde{\nu}_{D}\stackrel{\text{weak}^{*}}{\lra}m_{\bS^{2}}\otimes m_{\cY_{2}}\text{ as }D\ra\infty\text{ with }D\in A\label{eq:ASL2 convergences}
\end{equation}
where $m_{\bS^{2}}\otimes m_{\cY_{2}}$ is the product of the natural
uniform measures $ $on $\bS^{2}$ and $\cY_{2}$. We propose the
following conjecture as a generalization of Linnik's problem and Theorem
\ref{Thm: Main theorem} below as a generalization of Linnik's theorem:
\begin{conjecture}
\label{Conj:ASL2 conjecture}The convergence in (\ref{eq:ASL2 convergences})
holds for the subset $A=\bD=\set{D:D\not\equiv0,4,7\mod8}$.
\end{conjecture}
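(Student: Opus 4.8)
The plan is to recast the finite set $\set{(v/\norm{v},[\Del_v]):v\in\bS^2(D)}$ as the image of a single periodic torus orbit inside an arithmetic homogeneous space, and then to deduce \eqref{eq:ASL2 convergences} from measure rigidity. The set $\bS^2(D)$ is a principal homogeneous space for the class group of the order of discriminant $-4D$ attached to $\bQ(\sqrt{-D})$, and, crucially, this class-group action moves the direction $v/\norm{v}$ and the grid $[\Del_v]$ \emph{simultaneously}. Realising both pieces of data at once, one embeds the packet as an orbit of an algebraic torus $\mathbf{T}$ acting on $X=\mathbf{G}(\bQ)\backslash\mathbf{G}(\bA)$, where $\mathbf{G}$ is assembled from $\SO_3$ of the form $x^2+y^2+z^2$ (accounting for the sphere) together with the group $\ASL_2$ governing the grid. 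The measure $\tilde\nu_D$ is then, up to normalisation, the natural measure on this toral packet, so that any weak-$*$ limit $\mu$ is invariant under $\mathbf{T}$.

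First I would verify that every weak-$*$ limit $\mu$ of the $\tilde\nu_D$ is a \emph{joining} of the two factor systems $\bS^2$ and $\cY_2$. For this one needs the two marginals to be the uniform measures, and these are controlled unconditionally: the projection to $\bS^2$ equidistributes to $m_{\bS^2}$ by Duke's theorem \cite{Duke88} (this is exactly the content of Linnik's problem), while the projection to $\cY_2$ equidistributes to $m_{\cY_2}$ by Duke's equidistribution of Heegner points on the modular surface, which governs the shape $[\Lam_v]$, together with equidistribution of the marked point in the grid. Thus $\mu$ is a $\mathbf{T}$-invariant joining of $(\bS^2,m_{\bS^2})$ with $(\cY_2,m_{\cY_2})$.

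The heart of the matter is to classify these joinings and to show that the only one that occurs is the product $m_{\bS^2}\otimes m_{\cY_2}$. Here I would invoke the classification of joinings for higher-rank diagonalizable actions (Einsiedler--Lindenstrauss) to conclude that $\mu$ is either the product measure or else supported on the graph of an algebraic correspondence between the two factors. The latter possibility is genuinely dangerous rather than formal, because $\SO_3$ and $\PGL_2$ are isogenous, so a priori the shape could be a low-degree function of the direction through the exceptional isogeny; ruling out this diagonal joining requires a discriminant and degree argument showing that no fixed correspondence can simultaneously match the direction and grid data for all large $D\in\bD$. Carrying this out yields \eqref{eq:ASL2 convergences} and hence Conjecture~\ref{Conj:ASL2 conjecture}.

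\textbf{The main obstacle} is that the joining classification requires the acting torus to have rank at least two, whereas over $\bR$ the torus $\SO_2(\bR)$ has rank one. One gains the extra rank by passing to a prime $p$ at which $\mathbf{T}$ splits, thereby adding a rank-one $\bQ_p$-factor; but this splitting is \emph{precisely} the congruence condition $-D\in(\bF_p^\times)^2$, i.e.\ $D\in\bD(p)$, which is exactly the hypothesis under which Theorem~\ref{Thm: Main theorem} is established. To reach the full set $A=\bD$ one must remove this condition, which appears to require either a new measure-rigidity theorem for rank-one toral actions (a well-known open problem) or an analytic substitute bounding the joint Weyl sums $\sum_{v\in\bS^2(D)}\overline{Y_\ell(v/\norm{v})}\,f([\Del_v])$ through subconvexity for the associated Rankin--Selberg $L$-functions, in a uniformity that is not presently available. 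This is why the statement is posed as a conjecture rather than proved outright.
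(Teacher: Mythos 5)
The statement you were asked to prove is posed in the paper as a \emph{conjecture}: the paper contains no proof of it, and indeed your proposal ends by conceding that the argument cannot be completed with current technology. In that sense your write-up is honest and broadly in agreement with the paper: what you outline is precisely the strategy the paper uses to prove the weaker Theorem~\ref{Thm: Main theorem} (adelic torus packets, Duke-type equidistribution of the marginals, then the joining rigidity of \cite{EL2014}), and the paper itself states that the obstruction to Conjecture~\ref{Conj:ASL2 conjecture} is the removal of the splitting conditions. So there is no ``proof'' to compare yours against; the useful comparison is whether your account of the strategy and of the obstructions matches the paper's, and here there are three genuine inaccuracies.

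First, your claim that the $\cY_2$-marginal is ``controlled unconditionally'' is false, and this is a real gap even within your conditional framework: the paper explicitly remarks that it is \emph{not} known how to establish equidistribution of the grids $[\Delta_v]$ on $\cY_2$ by analytic methods. Duke's theorem and its refinement \cite{MH2006} only give the marginal on $\cX_2$ (the shapes $[\Lambda_v]$, identified with a coset of the group of squares in the class group, and only for square-free $D$ --- whence the extra condition $D\in\bF$ in Theorem~\ref{Thm: Main theorem}); this is exactly why the theorem is stated for the convergence \eqref{eq:SL2 convergence} on $\cX_2$ rather than \eqref{eq:ASL2 convergences} on $\cY_2$. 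Second, your rank bookkeeping is off: $\SO_2(\bR)$ is anisotropic (compact) over $\bR$, so it contributes \emph{nothing} to the rank of the diagonalizable action; to obtain a $\bZ^2$-action by semisimple elements one needs the torus to split at \emph{two} finite places, which is why Theorem~\ref{Thm: Main theorem} assumes $D\in\bD(\set{p,q})$ for two primes, not the single condition $D\in\bD(p)$ you describe. Third, the ``dangerous'' graph joining you worry about is ruled out much more simply than by any discriminant-and-degree argument: here $\bG_1$ is the projectivized unit group of the Hamilton quaternions (anisotropic over $\bQ$, compact quotient) while $\bG_2$ is split, so their adjoint forms are non-isomorphic over $\bQ$ and the quotients cannot both be compact; hence the only algebraic joining is the trivial one. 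Your concluding diagnosis --- that passing from $\bD(\set{p,q})$ to all of $\bD$ requires either rank-one measure rigidity or presently unavailable analytic input --- is consistent with the paper's own discussion and with the partial analytic results in its appendix.
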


Consider the natural projection $\pi:\cY_{2}\ra\cX_{2}$ induced
by the natural map $\phi:{\rm ASL_{2}}\ra{\rm SL_{2}}$.
Then $\tilde{\mu}_{D}\defi\pa{Id\times\pi}_{*}\tilde{\nu}_{D}$
is the normalized counting measure on
\[
\set{\pa{\frac{v}{\norm{v}},[\Lambda_{v}]}:v\in\bS^{2}(D)}\subset\bS^{2}\times\cX_{2}.
\]
Slightly simplifying the above problem we are interested to find
$A\subset\bD$ for which
\begin{equation}
\tilde{\mu}_{D}\stackrel{\text{weak}^{*}}{\lra}m_{\bS^{2}}\otimes m_{\cX_{2}}\text{ as }D\ra\infty\text{ with }D\in A.\label{eq:SL2 convergence}
\end{equation}

 Using two splitting conditions (see $\S$\ref{sec:Duke and joinings}) we are able to prove:
\begin{thm}[Main Theorem]\label{Thm: Main theorem}
Let $\bF$ denote the set of square free
integers and $p,q$ denote two distinct odd prime numbers. Then the
convergence (\ref{eq:SL2 convergence}) holds for $A=\bD(\set{p,q})\cap\bF$.
\end{thm}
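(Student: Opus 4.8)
The plan is to recast the convergence in \eqref{eq:SL2 convergence} as an equidistribution statement for periodic torus orbits in an $S$-arithmetic homogeneous space, with $S=\set{\infty,p,q}$, and then to apply the classification of joinings for higher-rank diagonalizable actions announced in $\S$\ref{sec:Duke and joinings}. First I would encode the pair $\pa{v/\norm{v},[\Lambda_v]}$ in a single orbit. Using the exceptional isogeny $\SO_3\cong\PGL_2$ and the quaternionic model of the form $x^2+y^2+z^2$, the direction $v/\norm{v}\in\bS^2$ becomes a point in the homogeneous space attached to the \emph{definite} quaternion algebra $B$ ramified at $\set{2,\infty}$, whose norm-one group is the compact $\SO_3(\bR)$, while the shape $[\Lambda_v]\in\cX_2$ is the Heegner/CM point in the modular surface attached to the \emph{split} algebra $\Mat_2(\bQ)$. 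For squarefree $D\in\bD(\set{p,q})$ both packets carry a transitive action of $\on{Pic}(\cO_D)$, where $\cO_D$ is the (essentially maximal) order of discriminant related to $-D$, so the set $\set{\pa{v/\norm{v},[\Lambda_v]}:v\in\bS^2(D)}$ is a single periodic orbit of a diagonally embedded adelic torus $\mathbf{T}_D$, and $\tilde\mu_D$ is its normalized orbit measure.

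Second, I would record the two marginals together with nondivergence. Projecting to $\bS^2$, the equidistribution of $\set{v/\norm{v}}$ towards $m_{\bS^2}$ is Linnik's theorem \cite{Linnik68}, which uses the splitting condition at $p$; projecting to $\cX_2$, the equidistribution of the CM points $\set{[\Lambda_v]}$ towards $m_{\cX_2}$ is the corresponding Linnik-type statement, which uses the splitting condition at $q$. Since $\cX_2$ is noncompact one must also rule out escape of mass, so that any weak-$*$ limit $\mu_\infty$ of the $\tilde\mu_D$ is a probability measure with these two marginals; this again follows from the splitting data controlling the geometry of the torus orbit.

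Third comes the dynamical input. Because $-D$ is a square modulo both $p$ and $q$, the torus $\mathbf{T}_D$ is split at the two primes, so $\mathbf{T}_D(\bQ_p)$ and $\mathbf{T}_D(\bQ_q)$ each contain a one-parameter diagonalizable subgroup acting simultaneously on both factors. As $p\ne q$ these two flows are independent, so $\mu_\infty$ is invariant under a rank-two diagonalizable action on the product space. Applying the joining classification for higher-rank diagonalizable actions, $\mu_\infty$ must be an algebraic joining of the two factor systems: either the product $m_{\bS^2}\otimes m_{\cX_2}$, or a measure supported on the graph of an isomorphism of the two systems that intertwines the torus actions.

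Finally---and this is the main obstacle---I would exclude the nontrivial joinings. At the split places $p$ and $q$ the two factors become locally isomorphic, so the classification cannot separate them locally and the obstruction must be global. Any intertwining isomorphism of the two $\mathbf{T}_D$-systems would, over $\bQ$, amount to an isomorphism of the governing quaternion algebras $B\cong\Mat_2(\bQ)$, which is impossible because $B$ is ramified at $\infty$ while $\Mat_2(\bQ)$ is split. The squarefree hypothesis $D\in\bF$ is used to pin down the order $\cO_D$ so that the two $\on{Pic}(\cO_D)$-torsor structures match exactly and no finite intermediate joining can survive, while the distinctness of $p$ and $q$ is precisely what makes the action genuinely rank two. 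Having eliminated every nontrivial joining, the surviving limit is forced to be $m_{\bS^2}\otimes m_{\cX_2}$, which gives \eqref{eq:SL2 convergence} for $A=\bD(\set{p,q})\cap\bF$ and proves the theorem.
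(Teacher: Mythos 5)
Your overall skeleton coincides with the paper's: parametrize the pairs $\pa{v/\norm{v},[\Lambda_v]}$ by a periodic orbit of a diagonally embedded torus on an $S$-arithmetic product space with $S=\set{\infty,p,q}$, establish the two marginal equidistributions, use the splitting at the two primes to obtain invariance under a rank-two diagonalizable group, and invoke the joining classification of \cite{EL2014}, excluding graph joinings because $\SO_3$ (anisotropic over $\bQ$, coming from the Hamilton quaternions) and the split form underlying $\cX_2$ are not isomorphic over $\bQ$. The genuine gap is in your second step, for the $\cX_2$-marginal. The shapes $[\Lambda_v]$ do \emph{not} form a full packet of Heegner points: the class group $\cC_D$ acts simply transitively on $\mathbf{S}^2(D)$, but the induced action on the associated binary forms is through squaring, $\alpha_{\gamma.\mathbf{v}}=\gamma^{2}.\alpha_{\mathbf{v}}$, so the set $\cP_D=\set{\mathbf{z}_{q_v}:v\in\bS^2(D)}$ is only a coset of the subgroup of squares $\cC_D^2\le\cC_D$, whose index is $2^{r(D)-1}$ by genus theory and hence unbounded along general square-free $D$. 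Consequently there is no off-the-shelf ``Linnik-type statement'' to quote: neither Duke's theorem for $\cH_L$ nor the classical Linnik theorem applies directly to $\cP_D$. The paper handles exactly this point via Claim~\ref{claim:squares} and the theorem of Harcos--Michel \cite[Theorem 6]{MH2006} on equidistribution of cosets of large subgroups of the class group (a subconvexity input), and \emph{this} is the only place where the hypothesis $D\in\bF$ enters (Remark~\ref{Remark: not square free}). One could instead run Linnik's ergodic method for this sub-collection, using that $\av{\cC_D^2}=D^{1/2+o(1)}$, but that requires an actual argument, not a citation.

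This misdiagnosis surfaces at the end of your proposal: the square-free hypothesis is not used to ``pin down the order'' nor to exclude finite intermediate joinings. In the paper that exclusion is purely algebraic --- any $\bQ$-subgroup $\bM<\bG_1\times\bG_2$ projecting onto both factors must equal $\bG_1\times\bG_2$ since the factors are $\bQ$-simple with distinct adjoint forms --- while the finite-index and non-ergodicity issues are dealt with beforehand by restricting to orbits of $G^{+}$ (the image of the simply connected cover, needed because $\bG_1(\bQ_{S_0})$ is not generated by unipotents) and by ergodic decomposition, so as to meet the actual hypotheses of \cite[Theorem 1.1]{EL2014} (saturation by unipotents, class-$\cA'$ action). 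Two smaller points: for the joining argument the marginals must be the uniform measures on the $S$-arithmetic spaces $X_j^{S_0}$, not merely on $\bS^2$ and $\cX_2$ --- the upgrade uses class number one together with invariance under $G_1^{+}$ and the torus --- and non-divergence is not a separate geometric estimate coming from ``the splitting data'': it follows from the $\cX_2$-marginal statement itself, since the other factor is compact.
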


\begin{remarks}
Our interest in the above problem arose via the work of Marklof \cite{Marklof-Frobenius} and W. Schmidt \cite{WSchmidt-sublattices} (see also \cite{EMSS}),
but as we later learned from P. Sarnak and R. Zhang, the question is closely related to
the work of Maass \cite{maass1956}.

	 Our method of proof builds on the equidistribution on~$\bS^2$ and on~$\cX_2$ (respectively on related covering spaces)
	as obtained by Linnik \cite{Linnik68} or Duke \cite{Duke88} (and in one instance more precisely
	the refinement of Duke's theorem obtained by Harcos and Michel \cite{MH2006}). 
 The crucial step is to upgrade these statements to the joint equidistribution. To achieve that 	
 we apply the recent classification
	of joinings for higher rank actions obtained by E.~Lindenstrauss and the second named author in~\cite{EL2014}.
	As such a classification is only possible in higher rank we need to require Linnik's splitting condition
	at two different primes.
		
The restriction to square-free numbers can be avoided but appears currently
	in our proof through the work of Harcos and Michel \cite{MH2006}, see also Remark \ref{Remark: not square free}.
	As Theorem~\ref{Thm: Main theorem} is assuming a splitting condition (actually two) Linnik's method \cite{Linnik68} could
	(most likely)
	be used to overcome the square-free condition. We refer also to \cite{ELMVDuke,EMV2010linnik}, where
	the Linnik method is used for slightly different problems.

 Using a break-through of
Iwaniec \cite{Iwaniec87}, it was shown by Duke \cite{Duke88} that
the congruence condition $D\in\bD(p)$ in Linnik's work is redundant.
In Conjecture \ref{Conj:ASL2 conjecture} we expressed our belief that the
congruence condition $D\in\bD(\set{p,q})$ in Theorem \ref{Thm: Main theorem}
is also superfluous. It is possible that analytic methods can again
be used to eliminate these congruence
conditions in the future although it does not seem to be a straightforward
matter. We refer to Appendix \ref{Appendix} for some findings in
this direction.	

 As we explain in $\S$\ref{sub:Proof component duke }
the equidistribution of $\set{[\Lambda_{v}]:v\in\bS^{2}(D)}$ on $\cX_{2}$
follows from a (refined) version of Duke's Theorem. In this context it
is not clear how to establish equidistribution of $\set{[\Delta_{v}]:v\in\bS^{2}(D)}$
on $\cY_{2}$ using the analytic methods. Using the methods below any
such equidistribution result on $\cY_{2}$ will imply a corresponding
convergence in (\ref{eq:ASL2 convergences}) for $A=\bD(\set{p,q})$.

 The higher dimensional analogues are more accessible. In fact working
with spheres in~$\bR^d$ we use unipotent dynamics 
in \cite{AES2014} to establish the equidistribution if~$d\geq 6$.
The cases~$d=4,5$ are slightly harder and need a mild congruence condition (namely that~$p\nmid D$ for a fixed odd prime~$p$)
for the method of~\cite{AES2014}. In an upcoming
paper \cite{ERW2015} of Ph.~Wirth, R.~R\"uhr, and the second named author the full result
is obtained for~$d=4,5$ by using effective dynamical arguments.
\end{remarks}

\textbf{Acknowledgements}: We would like to thank Elon Lindenstrauss, Philippe Michel,
and Akshay Venkatesh for many fruitful conversations over the last years on various topics
and research projects that lead to the current paper.
While working on this
project the authors visited the Israel Institute of Advanced Studies (IIAS) at the Hebrew University
and its hospitality is deeply appreciated. We thank Peter Sarnak and Ruixiang Zhang
for many conversations on these topics at the IIAS.

\section{Notation and organization of the paper}

We first fix some common notation from algebraic number theory: Let
$V_{\bQ}$ be the set of places on $\bQ$ containing all primes~$p$ and the archimedean place~$\infty$.
Let $\bZ_{p}$ denote the
$p$-adic numbers and for $S\subset V_{\bQ}$ we let $\bQ_{S}=\prod_{p\in S}^{'}\bQ_{p}$
be the restricted direct product w.r.t.\ the compact open subgroups
$\bZ_{p}$. Finally, we set $\bA_f=\prod^{'}_{p\in V_\bQ\setminus\{\infty\}}\bQ_p$,  $\widehat{\bZ}=\prod_{p\in V_\bQ\setminus\{\infty\}}\bZ_p$ and  $\bZ^{S}=\bZ\bigl[\set{\frac{1}{p}:p\in S\setminus\set{\infty}}\bigr]$.
Recall that~$\bQ=\bZ^{V_\bQ}$ is a cocompact lattice in the adeles~$\bA=\bQ_{V_\bQ}$.
The letter $e$ with or without a subscript will denote the identity
element of a group which is clear from the context.

A sequence of probability measures $\mu_{n}$ on a measurable space
$X$ is said to equidistribute to a probability measure $\mu$ as~$n\to\infty$ if
the sequence converges to $\mu$ in the $\text{weak}^{*}$ topology on
the space of probability measures on $X$. A probability measure $\mu$
is called a $\text{weak}^{*}$ limit of a sequence of measures
$\mu_{n}$ if there exists a subsequence $(n_{k})$ such that $\mu_{n_{k}}$
equidistribute to $\mu$ as $k\ra\infty$.

Given a locally compact group $L$ and a subgroup $M<L$ such that $L/M$ admits an $L$-invariant
probability measure, it is unique and we denote it by $m_{L/M}$ and
call it the \emph{uniform measure }on $L/M$.
Finally, the letter
$\pi$ (with or without some decorations) is used to denote various
projection maps whose definition will be clear from the context.
E.g.~if~$M<L$ are as above and~$K<L$ is a compact subgroup, there is a canonical
projection map~$\pi:L/M\to K\backslash L/M$ and we will still refer to~$\pi_*(m_{L/M})$
as the uniform measure on~$K\backslash L/M$.

We now give an overview of our proof of Theorem \ref{Thm: Main theorem}
and discuss the organization of the paper. In $\S$\ref{sec:orbits and duality},
we establish that the convergence (\ref{eq:SL2 convergence})
follows from an equidistribution of \char`\"{}joined\char`\"{} adelic (or $S$-adic)
torus orbits on a product of two homogeneous spaces. In $\S$\ref{sub:Proof component duke },
we use Duke's Theorem (resp.~\cite{MH2006}) to deduce that these orbits equidistribute to
a joining (see $\S$\ref{sec:Duke and joinings}
for the definition). Then, in $\S$\ref{sub:joinings} we show that this joining
must be the trivial joining. This will imply Theorem \ref{Thm: Main theorem}.

\section{Joined Adelic, $S$-adic and real torus orbits\label{sec:orbits and duality}}

In this section we show that Conjecture~\ref{Conj:ASL2 conjecture}
and Theorem \ref{Thm: Main theorem} follow from the equidistribution
of a sequence of ``adelic diagonal'' torus orbits on a product of
homogeneous spaces. We first explain this connection for Conjecture~\ref{Conj:ASL2 conjecture}, involving a homogeneous space for ${\rm ASL_{2}}$.

Let $\bG_{1}={\rm SO_{3}},\bG_{2}={\rm ASL_{2}}$ and $\bG=\bG_{1}\times \bG_{2}$, $G_{j}=\bG_{j}(\bR),\Gamma_{j}=\bG_{j}(\bZ)$
for $j=1,2$ and $G=\bG(\bR),\Gamma=\bG(\bZ)$, $K={\rm SO_{2}(\bR)}$ and fix $v\in\bS^{2}(D),D\in\bD$ throughout
this section. We wish to identify $K\setminus G_{1}\cong\bS^{2}$
so we let $k\in G_{1}$ act on $\bS^{2}$ by
the right action $(k,u)\mapsto k.u=k^{-1}u$; we
find it simpler to think of $\bS^{2}$ as row vectors and use the definition
$(k,u^{t})\mapsto k.u^t=u^{t}k$.
Note that this defines a transitive action satisfying $K={\rm Stab}_{G_1}(e_{3})$.
Recall the definition of $g_{v},k_{v},a_{v},[\Delta_{v}]$ from the
introduction and note that $e_{3}^tk_v=\norm{v}^{-1}v^t$.

Let $\textbf{S}^{2}\defi\bS^{2}/\Gamma_{1}$ and $\textbf{S}^{2}(D)\defi\bS^{2}(D)/\Gamma_{1}$
and $\mathbf{v}=v^t\Gamma_{1}$ and set $[\Delta{}_{\mathbf{v}}]=[\Delta_{v}]$
which is well-defined as $[\Delta_{\gamma. v}]=[\Delta_{v}]$ for all
$\gamma\in\Gamma_{1}$. The map $\mathbf{v}\in\textbf{S}^2(D)\mapsto\frac{\mathbf{v}}{\norm{\mathbf{v}}}\in\textbf{S}^2$
is also well-defined. It follows that the following double coset
\begin{equation}
K\times K\pa{k_{v},a_{v}k_{v}g_{v}}\Gamma_{1}\times\Gamma_{2}\label{eq:orbit of (v,lambda v)}
\end{equation}
represents the pair
\[
\pa{\frac{\mathbf{v}}{\norm{\mathbf{v}}},[\Delta_{\mathbf{v}}]}\in\textbf{S}^{2}\times\cY_{2}.
\]
Note that all the measures appearing in equation (\ref{eq:ASL2 convergences})
(resp. equation (\ref{eq:SL2 convergence})) are $\Gamma_{1}$-invariant
so if we consider their projection $\nu_{D}$ (resp. $\mu_{D}$) of
$\tilde{\nu}_{D}$ (resp. $\tilde{\mu}_{D}$) to $\textbf{S}^{2}\times\cY_{2}$
(resp. $\textbf{S}^{2}\times\cX_{2}$) we have that the convergence
(\ref{eq:ASL2 convergences}) is equivalent to
\begin{equation}
\nu_{D}\stackrel{\text{weak-}*}{\lra}m_{\textbf{S}^{2}}\otimes m_{\cY_{2}},D\ra\infty,D\in A\label{eq:ASL2 conv modulu Gamm1}
\end{equation}
and the convergence (\ref{eq:SL2 convergence}) is equivalent to
\begin{equation}
\mu_{D}\stackrel{\text{weak-}*}{\lra}m_{\textbf{S}^{2}}\otimes m_{\cX_{2}},D\ra\infty,D\in A.\label{eq:SL2 conv mod Gamma1}
\end{equation}

Roughly speaking, integral orbits on the $\bZ$-points of a variety
admitting a $\bZ$-action of an algebraic group $\bP$ may be parametrized
by an adelic quotient of the stabilizer. E.g., as we will see below,
$\Gamma_{1}$-orbits of $\bS^{2}(D)$, can be parametrized as an adelic
quotient of the stabilizer of $v$. The interested reader may consult
\cite[\S 3]{EV08}, \cite[\S 6.1]{EMV2010linnik} and \cite[Theorem 8.2]{PR94}.
The novelty here is that we consider a ``joint parametrization'' and
combine this with a recent work of the second named author with E.\ Lindenstrauss
\cite{EL2014}.

More concretely, consider the above right action of $G_{1}$ on $K\setminus G_{1}\cong\bS^{2}$
and set $\bH_{v}\defi{\rm Stab}_{\bG_{1}}(v)$. The group $\bH_{v}$
is defined over $\bZ\subset\bQ$ as $v\in\bZ^{3}$. Naturally, $k_{v}^{-1}{\rm Stab}_{G_{1}}\pa{\rm e_{3}}k_{v}=k_{v}^{-1}Kk_{v}=\bH_{v}(\bR)$.

In the proofs
below we will frequently use the ternary quadratic form $Q_0((v_1,v_2,v_3))=v_1^2+v_2^2+v_3^2=\|(v_1,v_2,v_3)\|_2^2$
for~$(v_1,v_2,v_3)$ belonging to~$\bQ^3$ or one of its completions.
The following lemma explains the congruence condition~$D\in \bD(p)$.

\begin{lem}
	Let~$v\in\bZ_p^3$  and~$D=Q_0(v)$.
We have that
\begin{equation}
-D=x^2\mbox{ for some }x\in\bZ_p\Rightarrow\bH_{v}(\bQ_{p})\text{ is a split torus}.\label{eq: split condition}
\end{equation}
\end{lem}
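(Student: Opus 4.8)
The plan is to identify $\bH_v$ with the special orthogonal group of the restriction of $Q_0$ to the plane $v^\perp$, and then to read off splitting from the arithmetic of this binary form. Throughout I assume $v$ is anisotropic, i.e. $D=Q_0(v)\neq 0$; this is the only case in which $\bH_v$ is a torus at all, and it is the case relevant to the application, where $D\in\bD$. First I would record that any $g\in\SO_3$ fixing $v$ is orthogonal, hence preserves $v^\perp$ and acts trivially on the line $\bQ_p v$; since $\det g=1$ this forces $\det\pa{g|_{v^\perp}}=1$. Conversely, any element of $\SO\pa{Q_0|_{v^\perp}}$ extends to $\SO_3$ by fixing $v$. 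This yields an isomorphism $\bH_v\cong\SO\pa{Q_0|_{v^\perp}}$ of algebraic groups over $\bQ_p$ (indeed over $\bQ$), so it suffices to decide when this rank-one group is split.

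Next I would invoke the classification of one-dimensional tori: the special orthogonal group of a nondegenerate binary quadratic space $(W,q)$ over $\bQ_p$ is the norm-one torus of the quadratic \'etale algebra $\bQ_p\pa{\sqrt{-\delta}}$, where $\delta=\on{disc}(q)$ is the determinant of a Gram matrix of $q$ (well defined modulo squares). Equivalently, $\SO(q)$ is split precisely when $q$ is isotropic, i.e. when $-\delta$ is a square in $\bQ_p$: an isotropic binary form is hyperbolic, and the special orthogonal group of the hyperbolic plane is $\bG_m$. Thus the whole question reduces to computing $\on{disc}\pa{Q_0|_{v^\perp}}$ modulo squares.

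Finally I would compute this discriminant by multiplicativity under orthogonal direct sum. The form $Q_0$ has Gram matrix the identity, so $\on{disc}(Q_0)=1$, while the orthogonal decomposition $\bQ_p^3=\bQ_p v\oplus v^\perp$ has $\on{disc}\pa{Q_0|_{\bQ_p v}}=Q_0(v)=D$ in the basis $\set{v}$. Comparing the determinants of the Gram matrices in two bases (which differ by a square) gives $D\cdot\on{disc}\pa{Q_0|_{v^\perp}}\equiv 1$, hence $\on{disc}\pa{Q_0|_{v^\perp}}\equiv D$ modulo $\pa{\bQ_p^\times}^2$. Therefore $\SO\pa{Q_0|_{v^\perp}}$ is split if and only if $-D$ is a square in $\bQ_p$, which is exactly the hypothesis $-D=x^2$, proving the implication.

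The substance lies in fixing conventions rather than in any deep step. The one point demanding care is the sign in the split criterion ($-\delta$ rather than $\delta$), which I would sanity-check against the familiar case $Q_0|_{\bR^2}=\langle 1,1\rangle$: its discriminant is $1$, yet $\SO_2(\bR)$ is the non-split compact torus, consistent with $-\delta=-1$ not being a square. The only genuine caveat is the degenerate boundary case $x=0$, i.e. $D=0$: then $v$ is isotropic, the orthogonal decomposition breaks down and $\bH_v$ is unipotent rather than a torus, so this case must be excluded — as it is in all our applications.
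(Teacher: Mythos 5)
Your proposal is correct and follows essentially the same route as the paper: both identify $\bH_v$ with $\SO\pa{Q_0|_{v^\perp}}$, compute the discriminant of the binary form on $v^\perp$ modulo squares by comparing Gram determinants with $\on{disc}(Q_0)=1$ (your multiplicativity step is exactly the paper's relation $D\pa{ac-\tfrac14 b^2}\in\pa{\bQ_p^\times}^2$), and conclude splitness from isotropy of the binary form. Your explicit exclusion of the degenerate case $D=0$ is a reasonable extra precision that the paper leaves implicit.
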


\begin{proof}
Let $w_1,w_2$ be a basis of the orthogonal complement of $v$ within~$\bQ_p^3$. Notice first that $\bH_{v}(\bQ_{p})\cong{\rm SO}(aX^{2}+bXY+cY^{2})$, where $a=\norm{w_1}_2^2,c=\norm{w_2}_2^2,b=2(w_1,w_2)$. The determinant of the companion matrix of~$Q_0$ w.r.t.~the basis $v,w_1,w_2$ is $1$ up-to $(\bQ_p^\times)^2$, that is, $D(ac-\frac14b^2)\in (\bQ_p^\times)^2$.
By the assumption on~$D$, $-\frac{4}{D}\in(\bQ_p^\times)^2$ so  $b^2-4ac\in (\bQ_p^\times)^2$ which shows that  $aX^{2}+bXY+cY^{2}$ is isotropic over $\bQ_p$. This implies the lemma.
\end{proof}

Similarly, consider the action of $G_{2}$ on $K\setminus G_{2}$
and note that
\[
{\rm Stab}_{G_{2}}({ K}a_{v}k_{v}g_{v})=g_{v}^{-1}k_{v}^{-1}a_{v}^{-1}Ka_{v}k_{v}g_{v}=g_{v}^{-1}\bH_{v}(\bR)g_{v}.
\]
Define the ``diagonally embedded'' algebraic torus $\bL_{v}$ by
\[
\bL_{v}(R):=\set{\pa{h,g_{v}^{-1}hg_{v}}:h\in\bH_{v}(R)}
\]
for any ring $R$. It is defined over $\bZ\subset\bQ$ as so is $\bH_{v}$
and $g_{v}\in{\rm SL_{3}}(\bZ)$.

In what follows we consider projections
of an adelic orbit onto $S$-arithmetic homogeneous spaces. In order
to define these projections note that $\bG_{1}$ and $\bG_{2}$ have
class number one, that is, for $j=1,2$ and for any $T\subset V_{\bQ}\setminus\set{\infty}$
we have
\begin{equation}
\bG_{j}\Bigl(\prod_{p\in T}\bZ_{p}\Bigr)\bG_{j}\bigl(\bZ^{T}\bigr)=\bG_{j}\bigl(\bQ_{T}\bigr).\label{eq:class number one}
\end{equation}
Indeed, for $\bG_{1}$ see \cite[\S 5.2]{EMV2010linnik} and for $\bG_{2}$
it follows from the same, well-known (see \cite{PR94}), assertions for the
simply-connected algebraic group ${\rm SL}_{2}$ and for $\bG_{a}^{2}$.
This implies that for $\set{\infty}\subset S\subset S'\subset V_{\bQ}$,
if we let $X_{j}^{S}\defi\bG_{j}(\bQ_{S})/\bG_{j}(\bZ^{S})$, $X^{S}\defi X_{1}^{S}\times X_{2}^{S}$
we have a well-defined projection map $\pi_{S',S}:X^{S'}\ra X^{S}$.
The map $\pi_{S',S}$ is given by dividing by $\bG(\prod_{p\in S'\setminus S}\bZ_{p})$
from the left and using (\ref{eq:class number one}). Now, consider
the following adelic orbit
\[
\cO_{D}^{\bA}:=(k_{v},e_{f},a_{v}k_{v}g_{v},e_{f})\bL_{v}(\bA)\bG(\bQ)\subset X^{V_{\bQ}},
\]
where~$e_f$ denotes the identity element in~$\bG_j(\widehat{\bZ})$ for~$j=1,2$.
Fix $\set{\infty}\subset S\subset V_{\bQ}$ and set $\cO_{D}^{S}\defi\pi_{V_{\bQ},S}\pa{\cO_{D}^{\bA}}$ and $\mu_{\cO_{D}^{S}}=(\pi_{V_{\bQ},S})_*(\mu_{\cO_{D}^{\bA}})$ where $\mu_{\cO_{D}^{\bA}}$ is the uniform measure on this orbit.
Although strictly speaking  $\cO_{D}^{S}$ depends on $v$ we omit $v$ from the notation as we will see below that it will not play a crucial role.

We now describe $\cO_{D}^{\infty}$. Take a complete set of representatives
$M_{v}\subset\bH_{v}(\bA_{f})$ for the double coset space
\[
\bH_{v}(\bR\times\widehat{\bZ})\setminus\bH_{v}(\bA)/\bH_{v}(\bQ)\cong \bH_{v}(\widehat{\bZ})\setminus\bH_{v}(\bA_{f})/\bH_{v}(\bQ),
\]
which is finite by \cite[Theorem 5.1]{PR94}. For $h\in M_{v}$, using
(\ref{eq:class number one}) we decompose $h=c_{1}(h)\gamma_{1}(h)^{-1}$
and $g_{v}^{-1}hg_{v}=c_{2}(h)\gamma_{2}(h)^{-1}$ with
\begin{equation}
c_{j}(h)\in\bG_{j}(\widehat{\bZ}),\gamma_{j}(h)\in\bG_{j}(\bQ),\, j=1,2.\label{eq:decomposition of h}
\end{equation}
We will use the abbreviation $\Theta_{K}\defi\set{(k,k):k\in K}$.
Moreover, let us write
 $$\cO_h\defi \Theta_K(k_{v}\gamma_{1}(h),a_{v}k_{v}g_{v}\gamma_{2}(h))\bG(\bZ)$$
 for $h\in M_v$.
\begin{prop}
\label{prop: S packet}
Let
$p:G/\Gamma\ra \pa{K\times K}\setminus G/\Gamma$
be the natural projection. Then,
\begin{enumerate}
\item\label{ustake1} $\cO_D^\infty=\bigsqcup_{h\in M_{v}} \cO_h$.
\item\label{ustake2} For any $h\in M_v$ the orbit $\cO_h$ projects under $p$ to a single point in $\on{supp}(\nu_D)$. Moreover, the correspondence
$h\mapsto p(\cO_h)$ is a bijection between $M_v$ and $\on{supp}(\nu_D)$.
\item\label{ustake3} $p_*(\mu_{\cO^D_\infty})=\nu_D$.
\end{enumerate}
\end{prop}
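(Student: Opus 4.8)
The plan is to prove all three parts at once, by computing the projection $\pi_{V_{\bQ},\infty}(\cO_D^{\bA})$ explicitly and then reading off the action of $p$.

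\emph{Part \eqref{ustake1}.} A general point of $\cO_D^{\bA}$ is $\bigl((k_vh_\infty, h_f),(a_vk_vh_\infty g_v, g_v^{-1}h_fg_v)\bigr)\bG(\bQ)$ with $(h_\infty, h_f)\in\bH_v(\bR)\times\bH_v(\bA_f)$. I would unwind $\pi_{V_{\bQ},\infty}$, which by class number one \eqref{eq:class number one} divides by $\bG(\widehat{\bZ})$ and replaces the point by its real coordinate twisted by the rational part of the finite coordinate. Writing $h_f$ inside the double coset $\bH_v(\widehat{\bZ})\,h\,\bH_v(\bQ)$ of the representative $h\in M_v$ and using \eqref{eq:decomposition of h}, the integral factors $c_1(h),c_2(h)$ are killed by the division by $\bG(\widehat{\bZ})$, the rational factors produce exactly the twists $\gamma_1(h),\gamma_2(h)$, and the surplus $\bH_v(\bQ)$-part is pushed into $\bG(\bZ)$. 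Since $\bH_v(\bR)=k_v^{-1}Kk_v$ and $a_v$ commutes with $K$, letting $h_\infty$ run over $\bH_v(\bR)$ turns the real coordinate $(k_vh_\infty\gamma_1(h), a_vk_vh_\infty g_v\gamma_2(h))$ into $\Theta_K\,(k_v\gamma_1(h), a_vk_vg_v\gamma_2(h))$. This is precisely $\cO_h$, giving $\cO_D^\infty=\bigcup_{h\in M_v}\cO_h$. Rerunning the computation on $h'=\kappa h q$ (with $\kappa\in\bH_v(\widehat{\bZ})$, $q\in\bH_v(\bQ)$) and using again $q\in\bH_v(\bR)=k_v^{-1}Kk_v$ to absorb $q$ into $\Theta_K$ shows $\cO_{h'}=\cO_h$, so $\cO_h$ depends only on the double coset of $h$; the disjointness of the union then follows from the injectivity established in Part \eqref{ustake2}.

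\emph{Part \eqref{ustake2}.} As $\cO_h$ is a single $\Theta_K$-orbit and $\Theta_K\subset K\times K$, its image is the single point $p(\cO_h)=(K\times K)(k_v\gamma_1(h), a_vk_vg_v\gamma_2(h))\bG(\bZ)$, and since $K\times K$ and $\Gamma=\Gamma_1\times\Gamma_2$ are direct products I may read the two coordinates separately in $(K\backslash G_1/\Gamma_1)\times(K\backslash G_2/\Gamma_2)\cong\textbf{S}^2\times\cY_2$. Put $v':=\gamma_1(h)^{-1}v$. For the first coordinate, $h=c_1(h)\gamma_1(h)^{-1}$ together with $v^th=v^t$ gives $v^t\gamma_1(h)=v^tc_1(h)$, a rational vector that is everywhere locally integral, hence in $\bZ^3$; as $c_1(h)\in\SO_3(\widehat{\bZ})$ preserves norm and primitivity, $v'\in\bS^2(D)$ and $e_3^tk_v\gamma_1(h)=\norm{v'}^{-1}(v')^t$, so the first coordinate is $\frac{\bv'}{\norm{\bv'}}$. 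For the grid, the analogous adelic identity
\[
\gamma_1(h)^{-1}g_v\gamma_2(h)=c_1(h)^{-1}g_vc_2(h)
\]
shows this rational matrix lies in $\SL_3(\widehat{\bZ})\cap\SL_3(\bQ)=\SL_3(\bZ)$; I would then check that its first two columns form an oriented basis of $\Lambda_{v'}$ (using $\det(u_1,u_2,v')=\norm{v'}^2=D>0$) and its last column $u_3$ satisfies $(u_3,v')=1$, so that it is an admissible choice of $g_{v'}$, i.e.\ $\gamma_1(h)^{-1}g_v\gamma_2(h)\in g_{v'}\Gamma_2$. Together with $k_v\gamma_1(h)=\kappa_0k_{v'}$ for some $\kappa_0\in K$ and $a_v=a_{v'}$ commuting with $K$, this yields $Ka_vk_vg_v\gamma_2(h)\Gamma_2=[\Delta_{v'}]$, whence $p(\cO_h)=\bigl(\frac{\bv'}{\norm{\bv'}},[\Delta_{v'}]\bigr)\in\on{supp}(\nu_D)$. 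Finally, that $h\mapsto\bv'$ is a bijection of $M_v$ onto $\textbf{S}^2(D)$ is the standard adelic parametrization of integral points for the class-number-one group $\bG_1$ (cf.\ \cite[\S 3]{EV08}, \cite[\S 6.1]{EMV2010linnik}, \cite[Theorem 8.2]{PR94}); this gives the bijectivity asserted in \eqref{ustake2} and the injectivity needed in \eqref{ustake1}.

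\emph{Part \eqref{ustake3}.} Here $\mu_{\cO_D^{\bA}}$ is the $\bL_v(\bA)$-invariant probability measure, so $p_*\mu_{\cO_D^\infty}$ is an atomic measure on the finite set $\{p(\cO_h):h\in M_v\}=\on{supp}(\nu_D)$ and it suffices to check that its weights are equal. Identifying the orbit measure with Haar measure on $\bH_v(\bR\times\widehat{\bZ})\backslash\bH_v(\bA)/\bH_v(\bQ)$, the weight at $p(\cO_h)$ is the volume of $\bH_v(\bR\times\widehat{\bZ})\,h\,\bH_v(\bQ)/\bH_v(\bQ)$. Because $\bH_v$ is a torus, hence abelian, $h\bH_v(\bQ)h^{-1}=\bH_v(\bQ)$ and this volume equals $\on{vol}\bigl(\bH_v(\bR\times\widehat{\bZ})/\bH_v(\bZ)\bigr)$, independently of $h$. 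Hence $p_*\mu_{\cO_D^\infty}$ assigns equal mass to each of the $\lvert M_v\rvert$ points of $\on{supp}(\nu_D)$ and therefore equals the normalized counting measure $\nu_D$.

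The routine ingredients are the class-number-one bookkeeping in Part \eqref{ustake1} and the abelian (torus) computation of equal weights in Part \eqref{ustake3}. The step demanding genuine care is the grid identification in Part \eqref{ustake2}: proving that $\gamma_1(h)^{-1}g_v\gamma_2(h)$ is a legitimate frame $g_{v'}$ (integrality from the displayed adelic identity, then the orientation and the condition $(u_3,v')=1$), and then invoking the adelic parametrization to promote the well-defined assignment $h\mapsto\bv'$ to a bijection onto $\textbf{S}^2(D)$, which is exactly what makes $h\mapsto p(\cO_h)$ a bijection onto $\on{supp}(\nu_D)$.
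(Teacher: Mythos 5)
Your overall route is the paper's route: the same unwinding of $\pi_{V_{\bQ},\infty}$ via the decompositions $h=c_{1}(h)\gamma_{1}(h)^{-1}$, $g_{v}^{-1}hg_{v}=c_{2}(h)\gamma_{2}(h)^{-1}$, the same key adelic identity $\gamma_{1}(h)^{-1}g_{v}\gamma_{2}(h)=c_{1}(h)^{-1}g_{v}c_{2}(h)\in\SL_{3}(\widehat{\bZ})\cap\SL_{3}(\bQ)=\SL_3(\bZ)$ for the grid identification, and the same commutativity-of-the-torus argument for equal weights. But two steps are genuinely incomplete. The first is the surjectivity in Part \eqref{ustake2}. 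The ``standard adelic parametrization'' you invoke does not by itself give that $h\mapsto\mathbf{v}'$ is onto $\mathbf{S}^{2}(D)$: what \cite[Theorem 8.2]{PR94} (class number one plus Witt's theorem) yields is a bijection between $M_{v}$ and the set of those $\mathbf{w}\in\mathbf{S}^{2}(D)$ lying in the \emph{genus} of $\mathbf{v}$, i.e.\ such that for every prime $p$ some $g_{p}\in\bG_{1}(\bZ_{p})$ carries $v$ to a representative of $\mathbf{w}$. To conclude that this set is all of $\mathbf{S}^{2}(D)$ the paper needs the additional local input \cite[Lemma 5.4.1]{EMV2010linnik} (any two primitive integer vectors of the same norm are $\bG_{1}(\bZ_{p})$-equivalent for every $p$). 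Without it you still get injectivity --- so the disjointness you defer from Part \eqref{ustake1} survives --- but the claimed bijection with $\on{supp}(\nu_{D})$, and hence the equality in Part \eqref{ustake3} (since $\nu_{D}$ charges every point of $\mathbf{S}^{2}(D)$), does not follow.

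The second gap is at the very end of Part \eqref{ustake3}, where you silently identify $\nu_{D}$ with the normalized counting measure on its support. That is not a definition: $\nu_{D}$ is the pushforward of the normalized counting measure $\tilde{\nu}_{D}$ on $\set{(\tfrac{v}{\norm{v}},[\Delta_{v}]):v\in\bS^{2}(D)}$ under the quotient by $\Gamma_{1}$, so the point of $\on{supp}(\nu_{D})$ indexed by $\mathbf{v}$ receives mass proportional to the orbit size $|\Gamma_{1}|/|{\rm Stab}_{\Gamma_{1}}(v)|$. Your equal-weights computation for $p_{*}(\mu_{\cO_{D}^{\infty}})$ therefore matches $\nu_{D}$ only if $|{\rm Stab}_{\Gamma_{1}}(v)|$ is independent of $v\in\bS^{2}(D)$. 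The paper addresses exactly this point: for large $D$ all stabilizers are trivial, because $\Gamma_{1}$ is finite and every nontrivial rotation in $\Gamma_{1}$ fixes only the two primitive integer points on its axis, with the finitely many remaining $D$ checked separately. You need this (easy but non-vacuous) step to close Part \eqref{ustake3}.
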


\begin{proof}
\eqref{ustake1} Using the set $M_{v}$ of representatives we can write $\cO_{D}^{\bA}$ as a disjoint union of
$\bL_{v}(\bR\times\widehat{\bZ})$-orbits:
$$
\cO_{D}^{\bA}=
\bigsqcup_{h\in M_{v}}(k_{v},e_{f},a_{v}k_{v}g_{v},e_{f})\bL_{v}(\bR\times\widehat{\bZ})(e_{\infty},h,e_{\infty},g_{v}^{-1}hg_{v})\bG(\bQ).
$$
Decomposing each~$h\in M_v$ and~$g_v^{-1}hg_v$ as in (\ref{eq:decomposition of h}) and using
that
\[
(\gamma_1(h),\gamma_1(h),\gamma_2(h),\gamma_2(h))\in\bG(\bQ)
\]
we arrive at
$$
\cO_{D}^{\bA}=
\bigsqcup_{h\in M_{v}}(k_{v},e_{f},a_{v}k_{v}g_{v},e_{f})\bL_{v}(\bR\times\widehat{\bZ})(\gamma_1(h),c_1(h),
\gamma_2(h),c_2(h))\bG(\bQ).
$$
Recalling that
$\pi_{V_{\bQ},\{\infty\}}$ is given by dividing by $\bG(\widehat{\bZ})$
from the left we get
\begin{equation}
\cO_{D}^{\infty}=\bigsqcup_{h\in M_{v}}(k_{v},a_{v}k_{v}g_{v})\bL_{v}(\bR)(\gamma_{1}(h),\gamma_{2}(h))\bG(\bZ).\label{eq:conjugation Theta K}
\end{equation}
As $\bG(\widehat{\bZ})\cap \bL_v(\bA_f)=\bL_v(\widehat{\bZ})$ this is indeed a disjoint union. Noting that $\Theta_{K}=(k_{v},a_{v}k_{v}g_{v})\bL_{v}(\bR)(k_{v}^{-1},(a_{v}k_{v}g_{v})^{-1})$
we arrive at \eqref{ustake1}.

\eqref{ustake2} We analyze $p(\cO_h)$ for $h\in M_v$. We first concentrate on the $\bG_1$ component. Identifying  $K\setminus G_{1}/\Gamma_{1}\cong\mathbf{S}^{2}$ we claim that $h\stackrel{\phi}{\mapsto} Kk_v\gamma_1(h)\Gamma_1$ is a well-defined bijection between $M_v$ and the set $\mathbf{S}^{2}(D)$. Indeed, it is shown in the proof of \cite[Theorem 8.2]{PR94} that under the above identification, $\phi$ is well-defined bijection between $M_{v}$ and the set of all
$\mathbf{w}\in \mathbf{S}^{2}(D)$ such that for all primes
$p$ there exists $g_{p}\in\bG_{1}(\bZ_{p})$ with $g_{p}.v=w$ for some $v\in \mathbf{v},w\in\mathbf{w}$ (where one uses the facts that  $\bG_{1}$ has class number 1 and that by Witt's Theorem $G_1(\bQ)$ act transitively on  $\bS^{2}(D)$). Now, by
\cite[Lemma 5.4.1]{EMV2010linnik} the latter holds for any $\mathbf{w}\in \mathbf{S}^{2}(D)$,
so $\phi$ is in fact a bijection from $M_{v}$ to $\mathbf{S}^{2}(D)$\footnote{Strictly speaking this is not needed but slightly
simplifies the argument in $\S$\ref{sub:Proof of mu_1,D} (cf.\ the
higher dimensionsal case in \cite{AES2014}).}.

To conclude the proof of~\eqref{ustake2} we show that if the first coordinate of $p(\cO_h)$ is $\mb{u}$ then the second one is $\br{\Lam_{\mb{u}}}$. Let $h\in M_{v}$ and denote $\gamma_{j}=\gamma_{j}(h),c_{j}=c_{j}(h)$
for $j=1,2$ so that
$
\cO_{h}=\Theta_{K}(k_{v}\gamma_{1},a_{v}k_{v}g_{v}\gamma_{2})\Gamma_{1}\times\Gamma_{2}.
$
Note that $e_{3}^{t}k_{v}\gamma_{1}=v^{t}\gamma_{1}=\pa{\gamma_{1}^{-1}v}^{t}$.
We denote $u=\gamma_{1}^{-1}v$. We need to show that
\begin{equation}
Ka_{v}k_{v}g_{v}\gamma_{2}\Gamma_{2}\stackrel{?}{=}[\Delta_{u}]=Ka_{u}k_{u}g_{u}\Gamma_{2}.\label{eq:Moved coset1}
\end{equation}
To see this note first that $a_{v}=a_{u}$ and that $k_{v}\gamma_{1}$ is a legitimate
choice of $k_{u}$. With these choices, (\ref{eq:Moved coset1}) (using the identity element of $K$ on both sides) will
follow once we show $g_{u}^{-1}\gamma_{1}^{-1}g_{v}\gamma_{2}\in\Gamma_{2}$.
The element $g_{u}^{-1}\gamma_{1}^{-1}g_{v}\gamma_{2}$ is certainly
a determinant 1 element which maps $\bR^{2}$ to itself. Furthermore,
the third entry of its third column is positive by the orientation
requirement in the definition of $g_{v}$ and $g_{u}$. Therefore,
it will be enough to show that this element maps $\bZ^{3}$ to itself.
Using that $\bZ=\widehat{\bZ}\cap\bQ\subset\bA_{f}$, we can see this
as follows:
\[
\begin{aligned}\bQ^{3}\supset g_{u}^{-1}\gamma_{1}^{-1}g_{v}\gamma_{2}\bZ^{3}=g_{u}^{-1}c_{1}^{-1}(c_{1}\gamma_{1}^{-1})g_{v}(\gamma_{2} & c_{2}^{-1})c_{2}\bZ^{3}=\\
=g_{u}^{-1}c_{1}^{-1}hg_{v}g_{v}^{-1}{h}^{-1}g_{v}c_{2}\bZ^{3} & =g_{u}^{-1}c_{1}^{-1}g_{v}c_{2}\bZ^{3}\subset\widehat{\bZ}^{3}.
\end{aligned}
\]

\eqref{ustake3} Recalling that $\mu_{\cO_{D}^{\infty}}=(\pi_{V_\bQ,\infty})_*(\mu_{\cO_D^\bA})$, we see that $\mu_{\cO_D^\infty}(\cO_h)$ is controlled by
$$\av{{\rm Stab}_{\bL_v(\bR\times \widehat{\bZ})}\bigl((e,h,e,g_{v}^{-1}hg_{v})\bG(\bQ)\bigr)}$$ which is independent of $h$ as $\bL_v$ is commutative. This together with~\eqref{ustake2} shows that $p_{*}(\mu_{\cO_{D}^{\infty}})$ is the normalized counting measure on its support. To show the same statement for $\nu_{D}$ we need to show that $\av{{\rm Stab}_{\Gamma_1}(Kk_v\gamma_1(h))}$ is independent of $h$. For large enough~$D$
this is clear since~$\Gamma_1$ is finite and every nontrivial~$\gamma\in\Gamma_1$ fixes
only two integer primitive points. The remaining cases can easily be checked (and are not
really important for us).
\end{proof}

\subsection{From ${\rm ASL_{2}}$ to ${\rm SL_{2}}$ \label{sub:ASL2 to SL2}}

Let us \emph{momentarily }(see Remark \ref{rem:SL2 ASL remark}) denote
$\overline{\bG}_{2}={\rm SL_{2}}$ and let $\overline{X_{j}^{S}},\mu_{\overline{X_{j}^{S}}},\overline{\cO_{D}^{S}},\mu_{\overline{\cO_{D}^{S}}}$
be the analogous objects to the ones defined above. Note that $\overline{\bG}_{2}$
also has class number 1. Simplified version of the discussion above
implies analogous results for these analogous objects. In particular
we have:
\begin{cor}
\label{cor:S convergence implies SL2 theorem}In order to establish
the convergence (\ref{eq:SL2 conv mod Gamma1}) for a subset $A\subset\bN$,
it is enough to show that for some $\set{\infty}\subset S$, $\mu_{\overline{\cO_{D}^{S}}}$
equidistribute to $\mu_{\overline{X_{1}^{S}}}\otimes\mu_{\overline{X_{2}^{S}}}$
when $D\ra\infty,D\in A$.\end{cor}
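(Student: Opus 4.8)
The plan is to obtain \eqref{eq:SL2 conv mod Gamma1} from the hypothesis by a pure functoriality argument: I would push the assumed $S$-adic equidistribution forward along two continuous projection maps and then identify the images of both the orbit measures and the limiting product measure. The only analytic input needed is the elementary fact that if probability measures $\mu_{n}$ converge weak$^{*}$ to a probability measure $\mu$, then $F_{*}\mu_{n}\to F_{*}\mu$ weak$^{*}$ for every continuous $F$ (test against $f\circ F$ for $f\in C_{b}$); since all the limits occurring here are already probability measures, no escape of mass can be introduced.

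Concretely, I would compose the archimedean projection $\pi_{S,\set{\infty}}\colon\overline{X}^{S}\to\overline{X}^{\infty}=\overline{X_{1}^{\infty}}\times\overline{X_{2}^{\infty}}$ (the barred analogue of the maps from \eqref{eq:class number one}) with the double-coset projection $\overline{p}\colon\overline{X}^{\infty}\to(K\times K)\backslash\overline{X}^{\infty}$, and apply $(\overline{p}\circ\pi_{S,\set{\infty}})_{*}$ to both sides of the assumed convergence $\mu_{\overline{\cO_{D}^{S}}}\to\mu_{\overline{X_{1}^{S}}}\otimes\mu_{\overline{X_{2}^{S}}}$. Under the identifications $K\backslash\SO_{3}(\bR)/\Gamma_{1}\cong\mathbf{S}^{2}$ and $K\backslash\SL_{2}(\bR)/\SL_{2}(\bZ)\cong\cX_{2}$ the target of $\overline{p}$ is $\mathbf{S}^{2}\times\cX_{2}$. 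On the orbit side, functoriality $\pi_{V_{\bQ},\set{\infty}}=\pi_{S,\set{\infty}}\circ\pi_{V_{\bQ},S}$ gives $(\pi_{S,\set{\infty}})_{*}\mu_{\overline{\cO_{D}^{S}}}=\mu_{\overline{\cO_{D}^{\infty}}}$, and the $\SL_{2}$-analogue of Proposition~\ref{prop: S packet} (item~\ref{ustake3}) gives $\overline{p}_{*}\mu_{\overline{\cO_{D}^{\infty}}}=\mu_{D}$. On the limit side, $\pi_{S,\set{\infty}}$ respects the product structure and carries each uniform measure to the corresponding uniform measure, so $(\pi_{S,\set{\infty}})_{*}(\mu_{\overline{X_{1}^{S}}}\otimes\mu_{\overline{X_{2}^{S}}})=\mu_{\overline{X_{1}^{\infty}}}\otimes\mu_{\overline{X_{2}^{\infty}}}$, and $\overline{p}_{*}$ then sends this to $m_{\mathbf{S}^{2}}\otimes m_{\cX_{2}}$, since the pushforward of a uniform measure under a quotient by a compact group is again the uniform measure. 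Combining the two sides yields exactly $\mu_{D}\to m_{\mathbf{S}^{2}}\otimes m_{\cX_{2}}$, which is \eqref{eq:SL2 conv mod Gamma1}.

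The argument is essentially a diagram chase, so I do not expect a deep obstacle; what actually requires verification are the ingredients I have invoked as ``analogous'' to the $\ASL_{2}$ discussion. The most substantial of these is the $\SL_{2}$-version of Proposition~\ref{prop: S packet}, in particular the identity $\overline{p}_{*}\mu_{\overline{\cO_{D}^{\infty}}}=\mu_{D}$, whose proof repeats the decomposition of the adelic orbit into $\bL_{v}(\bR\times\widehat{\bZ})$-orbits, the bijection between $M_{v}$ and $\on{supp}(\mu_{D})$, and the equal-weight argument based on commutativity of $\bL_{v}$, now with $\ASL_{2}$ replaced by $\SL_{2}$ and $\cY_{2}$ by $\cX_{2}$. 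The compatibility of the uniform measures with $\pi_{S,\set{\infty}}$ is immediate from class number one, \eqref{eq:class number one}. I would also note that both maps are proper (their fibers are quotients of the compact groups $\bG_{j}(\prod_{p}\bZ_{p})$ and $K\times K$), so the transfer of weak$^{*}$ convergence remains valid even when tested only against compactly supported functions, and the conclusion is therefore unconditional.
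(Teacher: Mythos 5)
Your proposal is correct and is essentially the argument the paper intends: the paper deduces the corollary from the $\SL_2$-analogue of Proposition~\ref{prop: S packet} (``simplified version of the discussion above''), and your diagram chase --- pushing the assumed convergence forward along $\pi_{S,\set{\infty}}$ and the $(K\times K)$-quotient, identifying $\overline{p}_{*}\mu_{\overline{\cO_{D}^{\infty}}}=\mu_{D}$ via item~\eqref{ustake3}, and matching uniform measures using class number one --- is exactly the intended fleshing-out. Your added care about properness of the projections and non-escape of mass (so that weak$^{*}$ convergence transfers) is a correct and welcome verification of a point the paper leaves implicit.
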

\begin{rem}
\label{rem:SL2 ASL remark} Since in the rest of the paper we will only prove results
regarding $\SL_2$ and in order not to burden the notation we change the notation introduced above and denote the objects related to
$\SL_2$ without the over-line. For example,  from now on, $\bG_{2}={\rm SL_{2}}$.
\end{rem}

\section{Duke's Theorem and Joinings\label{sec:Duke and joinings}}

Choose any two distinct odd prime numbers $p,q$ and define $S_{0}=\set{\infty,p,q}$.
Let $\eta$ be a $\text{weak}^{*}$ limit of $\pa{\mu_{\cO_{D}^{S_{0}}}}_{D\in\bD(\set{p,q})\cap\bF}$
and let $\pi_{j}:X^{S_{0}}\ra X_{j}^{S_{0}}$ denote the
natural projections for $j=1,2$.
Corollary \ref{cor:S convergence implies SL2 theorem}
reduces the proof of Theorem~\ref{Thm: Main theorem} to the statement
that $\eta=\mu_{X_{1}^{S_{0}}}\otimes\mu_{X_{2}^{S_{0}}}$.
Roughly speaking, the latter will be obtained in two steps: the first, which
relies on Duke's Theorem, is to show that $\pa{\pi_{j}}_{*}\eta=\mu_{X_{j}^{S_{0}}}$
for~$j=1,2$.
The second uses \cite{EL2014} to bootstrap the information furnished
by the first step to deduce that $\eta=\mu_{X_{1}^{S_{0}}}\otimes\mu_{X_{2}^{S_{0}}}$
(and it is this final step that requires the splitting condition
at two places).
For both steps (but mainly for the second step) we will need the following preliminary lemma:

\begin{lem}
\label{lem:Invariant under tori}
Let~$\eta$ be a weak~$^*$ limit as above. There exist $0\neq v_{p}\in\bZ_{p}^{3},0\neq v_{q}\in\bZ_{q}^{3}$
and $g_{p}\in{\rm SL}_{3}(\bZ_{p}),g_{q}\in{\rm SL}_{3}(\bZ_{q})$
such that $\eta$ is invariant under a diagonalizable subgroup of
the form
\begin{equation}
\mathbf{T}\defi\set{\pa{h_{p},h_{q},g_{p}^{-1}h_{p}g_{p},g_{q}^{-1}h_{q}g_{q}}:\pa{h_{p},h_{q}}\in\bH_{v_{p}}(\bQ_{p})\times\bH_{v_{q}}(\bQ_{q})}.\label{eq:diagonally embedded tori}
\end{equation}
Furthermore, $\bH_{v_{\ell}}(\bQ_{\ell}),\,\ell=p,q$ are split tori, and
so $\bH_{v_{p}}(\bQ_{p})\times\bH_{v_{q}}(\bQ_{q})$ contains a group
isomorphic to $\bZ^{2}$ which is generated by an element~$a_p\in\bH_{v_p}(\bQ_p)$
with eigenvalues~$p,1,p^{-1}$ and an element~$a_q\in\bH_{v_q}(\bQ_q)$
with eigenvalues~$q,1,q^{-1}$.
\end{lem}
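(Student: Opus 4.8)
The plan is to construct the diagonalizable group $\mathbf{T}$ from the local structure of the orbits $\cO_D^{S_0}$ and then show invariance of the weak$^*$ limit $\eta$ by exploiting the splitting condition $D\in\bD(\{p,q\})$ together with the finiteness of the set of relevant local data. First I would recall from the construction of $\cO_D^{S_0}$ that the adelic orbit $\cO_D^{\bA}$ is an orbit of the diagonally embedded torus $\bL_v(\bA)$, and that its projection to the $S_0$-component retains invariance under the $\bQ_p$- and $\bQ_q$-points of the local stabilizer. Concretely, each orbit $\cO_D^{S_0}$ is invariant under the image in $X^{S_0}$ of a group of the shape
\begin{equation*}
\set{\pa{h_p,h_q,g_v^{-1}h_pg_v,g_v^{-1}h_qg_v}:(h_p,h_q)\in\bH_v(\bQ_p)\times\bH_v(\bQ_q)},
\end{equation*}
since the stabilizer of $v$ over $\bQ_\ell$ acts on the $\ell$-adic factor while the diagonal embedding via $g_v$ dictates the action on the second ($\SL_2$) factor. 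Here the vectors $v_\ell$ and matrices $g_\ell$ are extracted from the $\ell$-adic data of $v$; the key point is that as $D$ ranges over $\bD(\{p,q\})\cap\bF$, these local invariance groups all have the \emph{same shape}.

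The second step is to pass to the limit $\eta$. Each $\mu_{\cO_D^{S_0}}$ is invariant under its local torus, but the torus itself varies with $D$ (through the dependence of $v$ on $D$). The standard device is to show that, up to the $\SL_3(\bZ_\ell)$-action, the local datum $(v_\ell,g_\ell)$ takes only \emph{finitely many} values as $D$ ranges over our set: indeed $v_\ell\in\bZ_\ell^3$ is determined up to $\SL_3(\bZ_\ell)$ by the valuation $v_\ell(Q_0(v_\ell))=v_\ell(D)$, and since $D$ is squarefree with $\ell\nmid D$ (as $\ell=p,q$ and $D\in\bD(\{p,q\})$ forces $-D\in(\bF_\ell^\times)^2$, in particular a unit), we have $v_\ell(D)=0$ and so a single $\SL_3(\bZ_\ell)$-orbit of local vectors occurs. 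Hence after replacing each $v$ by an $\SL_3(\bZ_\ell)$-translate we may assume $(v_p,v_q,g_p,g_q)$ are fixed, and then every $\mu_{\cO_D^{S_0}}$ is invariant under the \emph{one fixed} group $\mathbf{T}$. Invariance of the group closure under weak$^*$ limits then passes to $\eta$: for any continuous compactly supported $f$ and any $t\in\mathbf{T}$, the identity $\int f(t.x)\,d\mu_{\cO_D^{S_0}}=\int f\,d\mu_{\cO_D^{S_0}}$ survives the limit, giving $\mathbf{T}$-invariance of $\eta$.

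For the \textbf{splitting and the explicit eigenvalues}, I would invoke the lemma surrounding \eqref{eq: split condition}: the condition $D\in\bD(p)$ gives $-D\in(\bF_p^\times)^2$, hence $-D$ is a square in $\bQ_p$, so $\bH_{v_p}(\bQ_p)\cong\SO(Q_0|_{v_p^\perp})$ is the special orthogonal group of an isotropic binary form and is therefore a split torus, isomorphic to $\bQ_p^\times$; the same holds at $q$. A split one-dimensional $\bQ_p$-torus contains a distinguished $\bZ$-worth of elements coming from the unramified $\bQ_p^\times\to\bQ_p^\times/\bZ_p^\times\cong\bZ$; choosing the generator corresponding to the uniformizer $p$ produces an element $a_p\in\bH_{v_p}(\bQ_p)$ acting on $\bQ_p^3$ with eigenvalues $p,1,p^{-1}$ (the eigenvalue $1$ on $v_p$ and reciprocal eigenvalues $p,p^{-1}$ on the isotropic plane $v_p^\perp$), and similarly $a_q$ with eigenvalues $q,1,q^{-1}$. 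These two commuting elements generate a subgroup of $\bH_{v_p}(\bQ_p)\times\bH_{v_q}(\bQ_q)$ isomorphic to $\bZ^2$, as required.

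The main obstacle is the \emph{uniformization} in the second step: one must carefully arrange that the local data $(v_\ell,g_\ell)$ can be taken constant along the sequence, so that all the measures share a single invariance group whose invariance can then be inherited by $\eta$. This requires both the finiteness coming from $\ell\nmid D$ (squarefreeness and the congruence condition) and a compatibility check that conjugating the whole orbit by the fixed $\SL_3(\bZ_\ell)$-elements does not disturb the weak$^*$ convergence — a point that is routine once the finiteness is in place, but is the crux that makes a single group $\mathbf{T}$ work for the limit rather than a $D$-dependent family.
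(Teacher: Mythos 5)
Your outline has the right skeleton --- each $\mu_{\cO_D^{S_0}}$ is invariant under its own diagonally embedded torus, these tori must be uniformized before passing to the weak$^*$ limit, and the congruence condition yields splitness and the eigenvalues $p,1,p^{-1}$ and $q,1,q^{-1}$ (your last paragraph is correct and agrees with the paper). The gap is in your uniformization step, which is the crux of the lemma. First, $\SL_3(\bZ_\ell)$ is the wrong group: for $g\in\SL_3(\bZ_\ell)$ one has $\mathrm{Stab}_{\SO_3}(gv)=g\,\mathrm{Stab}_{\SO_3}(v)\,g^{-1}$ only when $g$ normalizes $\SO_3$, which a general unimodular matrix does not. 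Knowing that all the local vectors lie in a single $\SL_3(\bZ_\ell)$-orbit is automatic (every primitive vector of $\bZ_\ell^3$ does) and says nothing about the family of stabilizer tori $\bH_{v_\ell}(\bQ_\ell)$; to transport one such torus into another you need an element of $\SO_3(\bZ_\ell)$, i.e.\ a local Witt/isometry-extension argument, using that all the relevant $D$'s lie in the single square class $-(\bZ_\ell^\times)^2$. Second, and more seriously, ``replacing each $v$ by a translate'' is not harmless: it replaces $\mu_{\cO_D^{S_0}}$ by its pushforward under left translation by a conjugating element $k_D$ that \emph{depends on $D$}. The translated measures are then indeed invariant under one fixed group $\mathbf{T}$, but their weak$^*$ limit is related to $\eta$ only if the $k_D$ converge along the subsequence; otherwise $\mathbf{T}$-invariance of that limit gives no information about $\eta$. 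So the ``compatibility check'' you defer as routine is exactly the hard point, and repairing it forces a compactness/subsequence argument in any case.

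This is why the paper proceeds differently (and more simply): since $\bZ_\ell^3$ and $\SL_3(\bZ_\ell)$ are compact, one passes to a subsequence along which $v_{D_n}\to v_\ell$ and $g_{v_{D_n}}\to g_\ell$ for $\ell=p,q$. The lemma only asserts the \emph{existence} of data $(v_p,v_q,g_p,g_q)$, so one simply takes these limits as the data defining $\mathbf{T}$; invariance of $\eta$ under $\mathbf{T}$ then follows because each $\mu_{\cO_{D_n}^{S_0}}$ is invariant under its own torus $\bL_{v_{D_n}}(\bQ_{\{p,q\}})$ and these tori converge to $\mathbf{T}$ (any $t\in\mathbf{T}$ is approximated by elements $t_n$ of the $n$-th torus, and uniform continuity of a test function lets the invariance identity pass to the limit). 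Two smaller points: it is Hensel's lemma, not squarefreeness, that upgrades $-D\in(\bF_\ell^\times)^2$ to $-D\in(\bZ_\ell^\times)^2$ (squarefreeness plays no role in this lemma); and your finiteness discussion must also cover the matrices $g_{v_D}$ entering the second coordinates $g_{v}^{-1}h g_{v}$, which your argument never addresses but which the paper's compactness argument handles for free.
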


\begin{proof}
By Hensel's lemma any vector $v_{D}$
with $D\in\bD(S_{0})$ has the property that~$D=Q_0(v_D)\in -(\bZ_\ell^\times)^2$
for $\ell=p,q$. Moreover, $g_{v_{D}}\in{\rm SL_{3}}(\bZ_{\ell})$
for any prime $\ell$. We assume that $\eta$ is the $\text{weak}^{*}$ limit of
$\mu_{\cO_{D_{n}}^{S_{0}}}$ and let $v_{D_{n}}$ denote the integral
vector defining the orbit $\cO_{D_{n}}^{S_{0}}$.

For any prime $\ell$,
$\bZ_{\ell}^{3}$ and ${\rm SL_{3}}(\bZ_{\ell})$
are compact sets.
Thus we may choose a subsequence or, to simplify the notation, simply assume
that $\pa{v_{D_n}}$ converges in $\bZ_{p}^{3}$ to the vector~$v_p$,
in~$\bZ_q^3$
to~$v_{q}$,  $\pa{g_{v_{D_n}}}$
converges in ${\rm SL_{3}}(\bZ_p)$ to $g_{p}$, and in~${\rm SL}_3(\bZ_q)$ to $g_{q}$.
Note that $\cO_{D_n}^{S_{0}}$ admits a description, which is simliar to Proposition~\ref{prop: S packet}\eqref{ustake1}, as a union of $\Theta_K\times \bL_{v_{D_{n}}}(\bQ_{\set{p,q}})$-orbits. In particular $\mu_{\cO_{D_n}^{S_{0}}}$ is $\bL_{v_{D_{n}}}(\bQ_{\set{p,q}})$-invariant.
It readily follows that $\eta$ is invariant under the group appearing
in (\ref{eq:diagonally embedded tori}).

For the second assertion note that $\bH_{v_{\ell}}$ is the (split) orthogonal
group of the quadratic form $Q_{v_{\ell}}$
and that
 $Q_0(v_\ell)\in-(\bZ_\ell^\times)^2$ for~$\ell=p,q$.
Here~$Q_{v_\ell}$ is the isotropic (see the proof of~\eqref{eq: split condition}) quadratic form
on the orthogonal complement of~$v_\ell\in\bQ_p^3$.
The last assertion follows since $\bG_{1}(\bQ_{\ell})\cong{\rm PGL}_{2}(\bQ_{\ell})$ for $\ell=p,q$
and any split torus is conjugated to the diagonal group.
\end{proof}

\subsection{Two instances of Duke's Theorem.}\label{sub:Proof component duke }

In this section we prove the following proposition (which would hold
for any $S$ with $\infty\in S$):

\begin{prop}
\label{prop:Component wise duke} For $j=1,2$ let $\mu_{i,D}$ denote
the normalized probability measures on $\pi_{j}(\cO_{D}^{S_{0}})$.
Then $\mu_{i,D}$ equidistribute to $\mu_{X_{j}^{S_{0}}}$ when $D\ra\infty$ with~$D\in\bD(\set{p,q})\cap\bF$.
In particular, $\pa{\pi_{j}}_{*}\eta=\mu_{X_{j}^{S_{0}}}$ for~$j=1,2$.
\end{prop}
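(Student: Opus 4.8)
The plan is to reduce each of the two marginal equidistribution statements to a known instance of Duke's theorem (in the refined form of Harcos--Michel for the shape component). The key observation is that $\pi_j(\cO_D^{S_0})$ is, for each $j$, an $S_0$-arithmetic torus packet whose equidistribution is governed by the corresponding classical result on $\bS^2$ (for $j=1$) and on $\cX_2$ (for $j=2$).

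\textbf{The $j=1$ component.} First I would unwind the projection $\pi_1(\cO_D^{S_0})\subset X_1^{S_0}$. Using the packet description from Proposition~\ref{prop: S packet}\eqref{ustake1} (and its $S_0$-analogue invoked in Lemma~\ref{lem:Invariant under tori}), the first coordinate of the orbit is a union of $\bH_{v_D}(\bQ_{\set{p,q}})$-orbits parametrized by $M_{v_D}$, which under the identification $K\backslash G_1/\Gamma_1\cong\mathbf{S}^2$ maps bijectively onto $\mathbf{S}^2(D)$ by the map $\phi$ established in the proof of Proposition~\ref{prop: S packet}\eqref{ustake2}. Projecting $\mu_{\cO_D^{S_0}}$ to the real place and then to $\mathbf{S}^2$ therefore recovers (a measure proportional to) the normalized counting measure on $\set{\mathbf{v}/\norm{\mathbf{v}}:\mathbf{v}\in\mathbf{S}^2(D)}$. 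Hence the equidistribution of $\mu_{1,D}$ on $X_1^{S_0}$ is equivalent to the equidistribution of normalized integer points on the sphere $\bS^2(D)$ together with their behavior at the two fixed primes $p,q$; this is exactly Linnik's/Duke's theorem on $\bS^2$, lifted to the finite cover $X_1^{S_0}$. I would remark that the splitting condition $D\in\bD(\set{p,q})$ only fixes the local behavior at $p,q$ and does not obstruct the archimedean equidistribution, while square-freeness streamlines the packet count.

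\textbf{The $j=2$ component.} Here the target is $X_2^{S_0}$ for $\bG_2=\SL_2$ (per Remark~\ref{rem:SL2 ASL remark}), and the projection $\pi_2(\cO_D^{S_0})$ parametrizes the shapes $[\Lambda_v]$ of the orthogonal lattices, again as an $S_0$-torus packet conjugated by the matrices $g_{v}$. The equidistribution of $\set{[\Lambda_v]:v\in\bS^2(D)}$ on $\cX_2$ is precisely the statement that one obtains from the refined version of Duke's theorem due to Harcos and Michel~\cite{MH2006}; the point is that the shape of $\Lambda_v$ corresponds to a CM-point (equivalently, an ideal class in an order of discriminant governed by $D$) and equidistribution of such points on the modular surface is the content of that refinement. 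I would spell out the correspondence between the double coset $Ka_vk_vg_v\Gamma_2$ and the relevant Heegner/CM datum, then cite~\cite{MH2006} to conclude equidistribution of $\mu_{2,D}$ on $X_2^{S_0}$, with the local conditions at $p,q$ again only pinning down the finite-place data.

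\textbf{Conclusion and main obstacle.} Once both marginals equidistribute, the final sentence follows immediately: for any $\text{weak}^*$ limit $\eta$ one has $(\pi_j)_*\eta=\lim_D\mu_{j,D}=\mu_{X_j^{S_0}}$, since pushforward is continuous in the weak$^*$ topology and the full sequence $\mu_{j,D}$ converges. The main obstacle I anticipate is \emph{not} the analytic input itself (which is cited wholesale) but the bookkeeping that translates the abstractly-defined adelic packet measures $\pi_j(\mu_{\cO_D^{S_0}})$ into the concrete counting measures on $\bS^2(D)$ and on the set of shapes to which Duke's theorem and its refinement literally apply --- in particular verifying that the finite-level structure at $p,q$ (the choice of $v_p,v_q,g_p,g_q$) is compatible with the cover $X_j^{S_0}\to X_j^{\set{\infty}}$ and does not alter the limiting measure. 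A secondary subtlety is ensuring the weights $\mu_{\cO_D^{S_0}}(\cO_h)$ are uniform across the packet (so that the pushforward really is a normalized counting measure), which, as in Proposition~\ref{prop: S packet}\eqref{ustake3}, reduces to the commutativity of $\bL_v$ and a harmless finite-stabilizer check for small $D$.
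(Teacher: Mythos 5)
Your treatment of $j=1$ has a genuine gap: you reduce to classical Linnik/Duke equidistribution on $\bS^2$ and declare the passage to $X_1^{S_0}$ to be bookkeeping, but that passage is the actual content of the paper's proof and does not follow from the spherical statement. Since $\bG_1=\SO_3$ is not simply connected, strong approximation fails: $X_1^{S_0}$ decomposes into finitely many open orbits of $G_1^+\defi\bG_1(\bQ_{S_0})^+$ (the image of the simply connected cover), and a probability measure on $X_1^{S_0}$ can project to $\mu_{X_1^{\infty}}$ (equivalently, give equidistribution on $\mathbf{S}^2$) without being uniform --- for instance the normalized Haar measure on a single $G_1^+$-orbit, or a measure concentrated on a proper compact open level set at the places $p,q$, has uniform archimedean projection. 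The paper therefore argues differently: it takes a weak$^*$ limit $\mu$ of $(\mu_{1,D})$, invokes the $S$-arithmetic form of Duke's theorem from \cite[\S 4.6]{EMLV11} --- an input strictly stronger than equidistribution on $\bS^2$ --- to conclude that $\mu$ is invariant under $G_1^+$, and then uses Lemma \ref{lem:Invariant under tori} in an essential way, not merely to ``pin down finite-place data'': via the coboundary map of \cite[\S 8.2]{PR94}, which under $\bG_1(\bQ_\ell)\cong\PGL_2(\bQ_\ell)$ is the determinant into $\bQ_\ell^\times/(\bQ_\ell^\times)^2$, the split torus $T_p\times T_q$ surjects onto $\bG_1(\bQ_{S_0})/G_1^+$, so $T$-invariance together with $G_1^+$-invariance forces $\mu=\mu_{X_1^{S_0}}$. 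Without this step (or a substitute for it) your argument proves equidistribution only on the archimedean quotient, not on $X_1^{S_0}$.

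For $j=2$ you cite the correct theorem but miss the arithmetic fact that makes the refinement necessary, and this matters for the logic. The shapes do correspond to Heegner points (Claims \ref{claim: primitive quadratic form} and \ref{Claim:Heegner=00003Dlattice}), but the set $\cP_D=\set{\mathbf{z}_{q_v}:v\in\bS^2(D)}$ is in general a \emph{proper} subset of $\cH_L$: by genus theory it is a coset of the subgroup of squares $\cC_D^2<\cC_D$, of index $2^{r(D)-1}$ (Claim \ref{claim:squares}, following \cite[\S 4.2]{EMV2010linnik}). Consequently plain Duke for Heegner points (\cite[Theorem 2]{MV2006ICM}) does not apply; one needs precisely the Harcos--Michel statement \cite[Theorem 6]{MH2006} on cosets of large subgroups of the class group, and this is also exactly where the hypothesis $D\in\bF$ enters. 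Note also the asymmetry with $j=1$: here the $S$-arithmetic upgrade is legitimate because $\SL_2$ is simply connected (class number one, and the proof in \cite{MH2006} is adelic), whereas for $\SO_3$ it is not. Finally, in your concluding paragraph, ``pushforward is continuous in the weak$^*$ topology'' holds only for proper maps; $\pi_1$ has non-compact fibers isomorphic to $X_2^{S_0}$, so one must first rule out escape of mass, which the paper does in \S\ref{sub:joinings} using that $\pi_2$ has compact fibers.
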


Both cases are special cases of the so-called Duke's Theorem \cite{Duke88}
and its refinements \cite{MH2006} (cf. \cite{MV2006ICM} where Theorem 1 there corresponds to $j=1$ and Theorem 2 to $j=2$).

\subsubsection{Proof for $j=1$.}\label{sub:Proof of mu_1,D}
As we wish to show equidistribution
on the $S_{0}$-adic space, we will use the formulation in \cite[\S 4.6]{EMLV11},
with $\textbf{G}=\bG_{1}={\rm SO_{3}}$ being the projectivized group of units
in the Hamiltonian quaternions.

Let $\mu$ be a $\text{weak}^{*}$ limit of a subsequence of $\mu_{1,D}$.
Lemma \ref{lem:Invariant under tori} implies that $\mu$ is invariant
under a product of two split tori $T=T_{p}\times T_{q}\subset\bG_{1}(\bQ_{p})\times\bG_{1}(\bQ_{q})$.
By \cite[\S 4.6]{EMLV11} $\mu$ is also invariant under $\bG_{1}(\bQ_{S_{0}})^{+}\defi\Pi\pa{\tilde{\bG_{1}}(\bQ_{S_{0}})}$
where $\Pi:\tilde{\bG}_{1}\ra\bG_{1}$ is the natural morphism from
the simply-connected cover of $\bG_{1}$. We will be done once we
show the following claim: $\bG_{1}(\bQ_{S_{0}})$ is generated by
$\bG_{1}(\bQ_{S_{0}})^{+}$ and $T$. To this end, note that $\tilde{\bG}_{1}(\bR)\ra\bG_{1}(\bR)$
is surjective and so by \cite[\S 8.2]{PR94} there exists a homomorphism
$\Psi$ mapping the group $\bG_{1}(\bQ_{S_{0}})/\bG_{1}(\bQ_{S_{0}})^{+}$
 to $S\defi\bQ_{p}^{\times}/\pa{\bQ_{p}^{\times}}^{2}\times\bQ_{q}^{\times}/\pa{\bQ_{q}^{\times}}^{2}$.
Furthermore, under the natural isomorphisms $\bG_{1}(\bQ_{\ell})\cong{\rm PGL_{2}}(\bQ_{\ell})$,
$\ell=p,q$, the coboundary map $\Psi$
is nothing but the determinant map. With this it is easy to verify that the torus $T$ is mapped surjectively
onto $S$. Hence the proposition follows for~$j=1$.

\subsubsection{Proof for $j=2$.\label{sub:Proof for mi_2,D}}

In this case, equidistribution follows from a subtler argument. For
more details on the classical number theory constructions we are considering
below see \cite[\S 5.2]{cohen1993course}. Recall that a binary quadratic
form $q=aX^{2}+bXY+cY^{2}$ over~$\bZ$ is called primitive if $(a,b,c)=1$ and
that ${\rm disc}(q)\defi b^{2}-4ac$. Primitivity and discriminant
are stable under the usual $\SL_{2}(\bZ)$-equivalence. Let ${\rm Bin}_{L}=\{[q]:{\rm disc}(q)=L\}$
denote the set of primitive positive definite binary quadratic forms
of discriminant $L<0$ considered up-to $\SL_{2}(\bZ)$-equivalence.
Finally recall that a number is called a fundamental discriminant
if it is the discriminant of the maximal order in a quadratic field.

\begin{claim}
\label{claim: primitive quadratic form}Let $v\in\bS^{2}(D)$. If
$D\equiv1,2\mod4$ then the two dimensional quadratic lattice $q_{v}\defi\pa{\Lambda_{v},x^{2}+y^{2}+z^{2}}$
defines an element in ${\rm Bin}_{-4D}$. If $D\equiv3\mod4$ then $q_{v}\defi\pa{\Lambda_{v},\frac12(x^{2}+y^{2}+z^{2})}$
defines an element in ${\rm Bin}_{-D}$. 
\end{claim}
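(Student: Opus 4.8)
The plan is to fix the oriented $\bZ$-basis $v_1,v_2$ of $\Lambda_{v}$ with $\det(v_1,v_2,v)>0$ already used for $g_v$, so that $q_v$ is the class of the binary form $q(X,Y)=aX^2+bXY+cY^2$ with $a=\norm{v_1}_2^2$, $c=\norm{v_2}_2^2$ and $b=2(v_1,v_2)$; the orientation makes the $\SL_2(\bZ)$-class (hence the point of ${\rm Bin}$) well defined, the form is positive definite since $Q_0$ is, and $b$ is even. I would first settle the discriminant. As $v\perp\Lambda_{v}$ and $\norm{v}_2=\sqrt D$, the index formula $[\bZ^3:\bZ v\oplus\Lambda_{v}]=D$ of \eqref{eq:covolume-1} gives $\norm{v}_2\cdot\on{covol}(\Lambda_{v})=D$, so $\on{covol}(\Lambda_{v})=\sqrt D$ and $\det\smallmat{a&b/2\\b/2&c}=D$; hence ${\rm disc}(q)=b^2-4ac=-4D$ for the unscaled form.

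The main work is to compute the content $g=\gcd(a,b,c)$, which equals the greatest common divisor of the represented values $\set{\norm{w}_2^2:w\in\Lambda_{v}}$. I would determine $g$ one prime at a time. Since $\bZ^3/\Lambda_{v}\cong\bZ$ is torsion free, $\Lambda_{v}$ is saturated, so for every prime $\ell$ the reduction map $\Lambda_{v}/\ell\Lambda_{v}\hookrightarrow\bF_\ell^3$ is injective with $2$-dimensional image; as $v$ is primitive we have $\bar v\neq0$, and the image is exactly the plane $\bar v^\perp$. Consequently $\ell\mid g$ if and only if $Q_0$ vanishes identically on $\bar v^\perp$ over $\bF_\ell$.

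For odd $\ell$ this is impossible: $Q_0$ is nondegenerate on $\bF_\ell^3$, and a nondegenerate ternary form has no $2$-dimensional totally isotropic subspace (its Witt index is at most $1$); thus no odd prime divides $g$. For $\ell=2$, writing $v=(n_1,n_2,n_3)$, the congruence $\norm{w}_2^2\equiv w_1+w_2+w_3\pmod 2$ shows that $Q_0$ vanishes on $\bar v^\perp$ iff $(1,1,1)\in(\bar v^\perp)^\perp=\langle\bar v\rangle$, i.e.\ iff $\bar v=(1,1,1)$, i.e.\ iff $n_1,n_2,n_3$ are all odd. Reducing $D=n_1^2+n_2^2+n_3^2$ modulo $4$ gives $D\equiv\#\set{i:n_i\text{ odd}}\pmod 4$, so (as $D$ is odd here) all three coordinates are odd precisely when $D\equiv3\pmod4$.

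Putting this together: if $D\equiv1,2\pmod4$ then $g$ is odd and has no prime factor, so $g=1$ and $q_v\in{\rm Bin}_{-4D}$. If $D\equiv3\pmod4$ then $g$ is a power of $2$, and it cannot be divisible by $4$, since $16\mid{\rm disc}(q)=-4D$ would force $4\mid D$ against the oddness of $D$; hence $g=2$, the rescaled form $\tfrac12 Q_0|_{\Lambda_{v}}$ is primitive, and its discriminant is $-4D/4=-D$, so $q_v\in{\rm Bin}_{-D}$. I expect the only delicate points to be the saturation argument identifying the mod-$\ell$ image with $\bar v^\perp$ and the prime-$2$ bookkeeping that pins the content down to exactly $2$.
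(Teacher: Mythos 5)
Your proof is correct, and its key step differs genuinely from the paper's. Both arguments set up the form $aX^2+bXY+cY^2$ from an oriented basis and get $\mathrm{disc}=-4D$ from the covolume identity \eqref{eq:covolume-1} in the same way; the divergence is in how primitivity (and, for $D\equiv3\bmod4$, integrality of the halved form) is established. The paper shows $2\mid a$, $2\mid c$ when $D\equiv3\bmod4$ by a mod-$4$ case analysis on $b$, using that $a$ and $c$ are sums of three squares of primitive vectors, and then deduces primitivity of $Q_v$ (resp.\ $\tfrac12Q_v$) from the fact that for $D\in\bF$ the discriminant $-4D$ (resp.\ $-D$) is fundamental --- so the paper's proof of primitivity genuinely uses the square-free hypothesis. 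You instead compute the content $\gcd(a,b,c)$ directly and locally: it equals the gcd of the values $\norm{w}_2^2$ on $\Lambda_v$; saturation of $\Lambda_v$ identifies its reduction mod $\ell$ with the plane $\bar v^{\perp}\subset\bF_\ell^3$; the Witt-index bound rules out odd prime divisors; and the mod-$2$ identity $\norm{w}_2^2\equiv w_1+w_2+w_3$ pins the $2$-part to exactly $1$ or $2$ according to $D\bmod 4$. This buys two things: your argument needs no square-free assumption (it is exactly the kind of alternative the paper's footnote points to via \cite[Lemma 3.3]{AES2014}), and it delivers integrality of $\tfrac12 Q_v$ and primitivity in one stroke rather than as two separate steps; the paper's route, in turn, is shorter given that $D\in\bF$ is assumed in Theorem \ref{Thm: Main theorem} anyway. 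The only stylistic blemishes are harmless: the parenthetical ``as $D$ is odd here'' is unnecessary (the count $D\equiv\#\set{i:n_i\text{ odd}}\bmod 4$ with at most three odd coordinates already gives the equivalence), and in the last step one can note more simply that the content $g$ always satisfies $g^2\mid\mathrm{disc}$, so $g=2^k$ with $4^{k-1}\mid D$ forces $k\le1$.
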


\begin{proof}[Proof of Claim \ref{claim: primitive quadratic form}]
The possible choices for an oriented basis of $\Lambda_{v}$
give rise to the ${\rm SL_{2}(\bZ)}$-equivalence of binary quadratic forms. For calculating the discriminant and show primitivity, we choose $v_{1},v_{2}$
as in the introduction as a $\bZ$-basis for $\Lambda_{v}$ and define
$Q_{v}$ to be the quadratic form $\pa{\Lambda_{v},x^{2}+y^{2}+z^{2}}$
with respect to this basis. That is, $Q_{v}=aX^{2}+bXY+cY^{2}$, where $a=(v_{1},v_{1}),\, b=2(v_{1},v_{2}),\, c=(v_{2},v_{2})$.
It follows from Equation (\ref{eq:covolume-1}) that $ac-\frac{b^{2}}{4}=D$
or ${\rm disc}(Q_{v})=-4D<0$.  By construction $Q_{v}$ is positive definite.

We will show that if $D\equiv3\mod4$ then $2|a$ and
$2|c$. Indeed, if $4\nmid b$ the equation $ac-\frac{b^{2}}{4}=D$
implies that $ac$ is divisible by $4$. The claim follows since $a$
and $c$ are sums of three squares so if $4|a$ or $4|c$ we will
have a contradiction to the primitivity of the vectors $v_{1}$ or
$v_{2}$. If $4|b$ then $ac\equiv3\mod4$. So without loss of generality
we may assume that $a\equiv3,c\equiv1\mod4$. This implies that all
the coordinates of $v_{1}$ are odd and exactly two of the coordinates
of $v_{2}$ are even. But then $\frac{b}{2}=(v_{1},v_{2})$ is odd,
which is a contradiction. Primitivity of $Q_{v}$ (resp. $\frac{1}{2}Q_{v}$
for $D\equiv3\mod4$) and the last statement of the claim
follow since for $D\in\bF$ we have ${\rm disc}(Q_{v}){\rm =disc}(\bQ(\sqrt{-D})$
(resp. ${\rm disc}(\frac{1}{2}Q_{v}){\rm =disc}(\bQ(\sqrt{-D})$
for $D\equiv3\mod4$), which implies the claim\footnote{The argument from \cite[Lemma 3.3]{AES2014}
could also be used to prove primitivity without the assumption $D\in\bF$.}.
\end{proof}

Due to Claim~\ref{claim: primitive quadratic form} we always set~$L=-4D$ if~$D\equiv1,2\mod4$
and~$L=-D$ if~$D\equiv3\mod4$.
Recall that $\cX_{2}\cong\Gamma_{2}\setminus\bH$ by sending $Kg\Gamma_{2}$
to $\Gamma_{2}g^{-1}.i\in\Gamma_{2}\setminus\bH$, where the action
on $i\in\bH$ is given by the regular M\"obius transformation. For $\alpha\in{\rm Bin}_{L}$
choose a quadratic form $q$ such that $\alpha=[q]$ and we denote
by $z_{q}$ the unique root of $q(X,1)$ belonging to the hyperbolic
plane $\bH$ and by $\mathbf{z}_{q}$ its $\Gamma_{2}$-orbit.
If~$q=\frac12Q$ (c.f.\ the case~$D\equiv 3\mod 4$ above), we may use the polynomial~$q(X,1)$
or~$Q(X,1)$ and obtain the same root --- we may also write~$\mathbf{z}_Q$ for the~$\Gamma_2$-orbit
of the root.
Finally, we define
$\mathbf{z}_{\alpha}=\mathbf{z}_{q}$ and note that this definition
does not depend on the choice of $q$ (within the~$\Gamma_2$-orbit). The set of Heegner points of
discriminant $L$ is $\cH_{L}\defi\set{\mathbf{z}_{\alpha}:\alpha\in{\rm Bin}_{L}}$.

\begin{claim}
\label{Claim:Heegner=00003Dlattice}Under the isomorphism $\cX_{2}\cong\Gamma_{2}\setminus\bH$
described above we have $\mathbf{z}_{q_{v}}=[\Lambda_{v}]$. \end{claim}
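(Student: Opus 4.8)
The plan is to compute both sides of the asserted identity as explicit points of $\Gamma_{2}\backslash\bH$ and check that they coincide. Recall from the introduction that $[\Lambda_{v}]$ is the shape of the unimodular planar lattice $a_{v}k_{v}\Lambda_{v}$, where $k_{v}\in\SO_{3}(\bR)$ carries $v^{\perp}$ onto $\bR^{2}\times\set{0}$ with $k_{v}v=\norm{v}e_{3}$, and $a_{v}=\diag{D^{-1/4},D^{-1/4},D^{1/2}}$ rescales. Writing $w_{j}\defi a_{v}k_{v}v_{j}$ for the images of the oriented basis $v_{1},v_{2}$ of $\Lambda_{v}$ (these lie in $\bR^{2}\times\set{0}$, so we read them as vectors of $\bR^{2}$) and letting $g\in\GL_{2}(\bR)$ be the matrix whose columns are $w_{1},w_{2}$, the lattice is $g\bZ^{2}$ and hence $[\Lambda_{v}]=Kg\Gamma_{2}\in\cX_{2}$. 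First I would record that $\det g=1$: both $k_{v}$ and $a_{v}$ have determinant $1$, and the orientation convention $\det(v_{1},v_{2},v)>0$ together with $k_{v}v=\norm{v}e_{3}$ forces $w_{1},w_{2}$ to be positively oriented in $\bR^{2}$.

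Next I would compute the Gram matrix of $g$. Since $v_{1},v_{2}\in v^{\perp}$, the vectors $k_{v}v_{j}$ lie in $\bR^{2}\times\set{0}$, where $a_{v}$ acts as multiplication by $D^{-1/4}$; as $k_{v}\in\SO_{3}(\bR)$ preserves the inner product this gives $(w_{i},w_{j})=D^{-1/2}(v_{i},v_{j})$. With $a=(v_{1},v_{1})$, $b=2(v_{1},v_{2})$, $c=(v_{2},v_{2})$ as in Claim \ref{claim: primitive quadratic form}, this reads
\[
g^{t}g=D^{-1/2}\mat{a & b/2\\ b/2 & c},
\]
whose determinant is $D^{-1}(ac-\tfrac14 b^{2})=1$ by \eqref{eq:covolume-1}, consistent with $g\in\SL_{2}(\bR)$.

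Finally I would transport $[\Lambda_{v}]=Kg\Gamma_{2}$ through the isomorphism $\cX_{2}\cong\Gamma_{2}\backslash\bH$, which sends it to $\Gamma_{2}g^{-1}.i$. Here I would invoke the standard dictionary between the symmetric space and $\bH$: if $g^{-1}.i=x+iy$ then, because $\SO_{2}(\bR)$ stabilizes $i$, the point depends only on $g^{t}g$ via $(g^{t}g)^{-1}=\tfrac{1}{y}\smallmat{x^{2}+y^{2} & x\\ x & 1}$. Inverting the Gram matrix gives $(g^{t}g)^{-1}=D^{-1/2}\smallmat{c & -b/2\\ -b/2 & a}$, and comparing entries yields $y=\sqrt{D}/a$ and $x=-b/(2a)$, so that $g^{-1}.i=-\tfrac{b}{2a}+\tfrac{\sqrt{D}}{a}\,i$. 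On the other hand $z_{q_{v}}$ is the root of $q_{v}(X,1)$ in $\bH$, namely $\frac{-b+\sqrt{b^{2}-4ac}}{2a}=-\tfrac{b}{2a}+\tfrac{\sqrt{D}}{a}\,i$, since $b^{2}-4ac=-4D$ (and the overall factor $\tfrac12$ in the case $D\equiv3\bmod 4$ leaves the root unchanged, as already noted). As the two points agree, $\Gamma_{2}g^{-1}.i=\Gamma_{2}z_{q_{v}}$, i.e.\ $\mathbf{z}_{q_{v}}=[\Lambda_{v}]$.

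The computation itself is routine; the only real care is the bookkeeping of conventions --- column- versus row-vector lattices, the sides on which $K$ and $\Gamma_{2}$ act, and the inverse appearing in $Kg\Gamma_{2}\mapsto\Gamma_{2}g^{-1}.i$ --- together with checking orientation so that $\det g=+1$ and $\im(g^{-1}.i)>0$. I expect this convention-tracking, rather than any step of the calculation, to be the main thing to get right.
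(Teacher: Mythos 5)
Your proof is correct and takes essentially the same route as the paper: the paper likewise reduces to the $2\times 2$ matrix $N_v$ (your $g$, up to the scalar action of $a_v$, which is trivial for M\"obius transformations), records the inner-product identities $\alpha^{2}+\gamma^{2}=a$, $\beta^{2}+\delta^{2}=c$, $\alpha\beta+\gamma\delta=\frac{b}{2}$, $\det N_v=\sqrt{D}$ --- which are exactly your Gram matrix computation $g^{t}g=D^{-1/2}\smallmat{a & b/2\\ b/2 & c}$ --- and then verifies $N_v^{-1}.i=\frac{-b/2+i\sqrt{D}}{a}=z_{Q_v}$ directly. The only cosmetic difference is that you pass through the dictionary $(g^{t}g)^{-1}=\frac{1}{y}\smallmat{x^{2}+y^{2} & x\\ x & 1}$ while the paper evaluates the M\"obius action on explicit entries, and your orientation check $\det g=+1$ makes explicit the sign the paper absorbs into $\det N_v=\sqrt{D}>0$.
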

\begin{proof}[Proof of Claim \ref{Claim:Heegner=00003Dlattice}]
This follows from a straightforward calculation which is crucial
to the argument, so we carry it out in details. Recall that $\phi:{\rm ASL_{2}}\ra{\rm SL_{2}}$
denotes the natural projection and let $M_{v}=\phi(a_{v}k_{v}g_{v})$.
The claim will follow once we show that $M_{v}^{-1}.i=z_{Q_{v}}$
where $Q_{v}$ is the quadratic form w.r.t.$\ $the basis
$v_{1},v_{2}$ used to define $g_{v}$. To this end, let
$N_v=\smallmat{\alpha & \beta\\ \gamma & \delta}$
be the matrix whose columns are the first two entries of the vectors
$k_{v}v_{1},k_{v}v_{2}\in\bR^{3}$. As scalar matrices act trivially
as M\"obius transformations, the action of $a_{v}$ may be ignored and
also the cases $D\equiv3\mod4$ and $D\equiv1,2\mod4$ may be treated
uniformly. In other words, it is enough to show that $N_{v}^{-1}.i=z_{Q_{v}}$.
By the definition of $k_{v}$, the third entries of $k_{v}v_{1},k_{v}v_{2}\in\bR^{3}$
are zeroes, so we have the following equalities: $\alpha^{2}+\gamma^{2}=\norm{v_{1}}^{2}=a$,
$\beta^{2}+\delta^{2}=\norm{v_{2}}^{2}=c$ and $\alpha\beta+\gamma\delta=(v_{1},v_{2})=\frac{b}{2}$
and finally by (\ref{eq:covolume-1}) that $\det{N_v}=\alpha\delta-\beta\gamma=\sqrt{D}$.
The claim now follows since
\begin{multline*}
N^{-1}.i=\frac{\delta i-\beta}{-\gamma i+\alpha}=\frac{-\frac{b}{2}+i\sqrt{D}}{a}\\
=\frac{-b+\sqrt{-4D}}{2a}=\frac{-b+\sqrt{b^{2}-4ac}}{2a}=z_{Q_{v}}.
\end{multline*}
\end{proof}

It is well-known \cite[5.2.8]{cohen1993course} that ${\rm Bin}_{L}$,
and therefore also $\cH_{L}$, is parametrized by $\cC_{D}\defi{\rm Pic}(R_{L})$,
the class group of the unique order $R_{L}\subset\bQ(\sqrt{-D})$ of discriminant $L$.
By Claim \ref{claim: primitive quadratic form},
in both cases (regarding the definition of~$L$ in terms of~$D$),
$\cC_{D}$ is the class group of $\bQ(\sqrt{-D})$.

Let
\begin{equation}
\cP_{D}\defi\set{\mathbf{z}_{q_{v}}:v\in\bS^{2}(D)}\subset\cH_L.\label{eq:def of Pd}
\end{equation}
Another instance of Duke's Theorem (see \cite[Theorem 2]{MV2006ICM})
implies that $\cH_L$ equidistribute on $\Gamma_{2}\setminus\bH$
when $D\ra\infty$, $D\in\bD\cap\bF$. If $\cP_L$ would always be equal
to $\cH_L$, we could conclude in the same way as we did in the case
$j=1$ above (e.g.\ using \cite[\S 4.6]{EMLV11}). However, this is not always
the case by the following claim.

\begin{claim}
\label{claim:squares}
Let $\cC_{D}^{2}$ be the subgroup of squares
in $\cC_{D}$. Under the above mentioned parametrization of $\cH_L$
in terms of the class group $\cC_{D}$
the set $\cP_{D}$ corresponds to a coset of $\cC_{D}^{2}<\cC_{D}$.
We further note that $\av{\cC_{D}^{2}}\asymp D^{\frac{1}{2}+o(1)}$ (and~$\cC_D^2=\cC_D$ if~$D$ is a prime).
\end{claim}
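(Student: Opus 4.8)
The plan is to understand the parametrization map precisely and track how the set $\cP_D$ sits inside $\cH_L$ under the class group action. Recall that $\cC_D = {\rm Pic}(R_L)$ acts simply transitively on ${\rm Bin}_L$ (equivalently on $\cH_L$) via composition of binary quadratic forms, so to identify $\cP_D$ as a coset of a subgroup it suffices to understand which forms arise as $q_v$ for $v \in \bS^2(D)$. The key structural input is that each $v$ gives a \emph{ternary} decomposition $\bZ^3 = \bZ v \oplus \Lambda_v$ (up to finite index $D$), and the binary form $Q_v$ on $\Lambda_v$ is precisely the restriction of the ternary form $x^2+y^2+z^2$ to the orthogonal complement of $v$. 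The classical fact I would invoke is that the genus/spinor theory of the ternary form (or equivalently Gauss's theory relating ternary forms to the \emph{square} of the class group) forces these restricted forms to lie in a single coset of $\cC_D^2$.

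First I would set up the bijection of Proposition~\ref{prop: S packet}\eqref{ustake2} together with Claim~\ref{Claim:Heegner=00003Dlattice}: the map $v \mapsto \mathbf{z}_{q_v}$ factors through $\mathbf{S}^2(D) = \bS^2(D)/\Gamma_1$, and $\mathbf{S}^2(D)$ is in bijection (via $\phi$ in Proposition~\ref{prop: S packet}) with the finite double-coset set $M_v \cong \bH_v(\widehat{\bZ})\backslash \bH_v(\bA_f)/\bH_v(\bQ)$. The group $\bH_v(\bA_f)/\bH_v(\bQ)$ modulo $\bH_v(\widehat{\bZ})$ is the Picard group of the (split) orthogonal group, which via $\bG_1(\bQ_\ell) \cong {\rm PGL}_2(\bQ_\ell)$ is identified with $\cC_D$; I would make this identification explicit so that the $\Gamma_1$-action on $\bS^2(D)$, the $\cC_D$-action on $M_v$, and the $\cC_D$-action on $\cH_L$ by composition all match up compatibly. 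The heart of the matter is a comparison of two different torus quotients: the orthogonal-group adelic quotient attached to the \emph{ternary} form and the ideal-class group $\cC_D = {\rm Pic}(R_L)$ attached to the \emph{binary} form, both of which are finite abelian groups that I want to relate by a squaring map.

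The main obstacle — and the step I expect to carry the real content — is establishing that the natural map from the ternary parametrization (by $\bS^2(D)/\Gamma_1$) to the binary parametrization (by $\cC_D$ acting on $\cH_L$) has image exactly a $\cC_D^2$-coset, rather than all of $\cC_D$ or some other coset. This is the arithmetic analogue of Gauss's theorem that the genus of a ternary form corresponds to $\cC_D/\cC_D^2$ and that $\bS^2(D)$ (with the $\SO_3(\bZ)$-action) is governed by $\cC_D^2$ through the correspondence between representations of integers by ternary forms and the square classes. Concretely, I would argue that $\Gamma_1 = \bG_1(\bZ) = \SO_3(\bZ)$ acts on the fiber so that its orbits on $M_v$ correspond to $\cC_D^2$-cosets: the spinor norm / principal genus theory makes $\bH_v(\bZ)\backslash \bH_v(\bA_f)/\bH_v(\bQ)$, after dividing by the $\Gamma_1$-identifications, collapse exactly the squares. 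Transitivity of $\cC_D$ on $\cH_L$ then pins $\cP_D$ down to a single coset.

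For the final numerical assertion I would separate two clauses. If $D$ is prime then $-D \equiv 1 \bmod 4$ forces $L = -D$ to be fundamental and $\cC_D$ to have odd order (by genus theory, the $2$-torsion of $\cC_D$ has rank one less than the number of prime discriminant factors, which is $1$ here, hence trivial $2$-torsion), so squaring is an automorphism and $\cC_D^2 = \cC_D$. For the general growth estimate $|\cC_D^2| \asymp D^{1/2 + o(1)}$, I would combine the Brauer--Siegel type bound $|\cC_D| = D^{1/2 + o(1)}$ (the class number formula gives $h(L) \ll D^{1/2}\log D$ and $h(L) \gg D^{1/2 - o(1)}$ ineffectively) with the observation that the index $[\cC_D : \cC_D^2] = |\cC_D[2]|$ is a $2$-group whose size is $2^{\omega(D)-1} = D^{o(1)}$ by genus theory, since $\omega(D) \ll \log D / \log\log D$. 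Dividing, $|\cC_D^2| = |\cC_D|/|\cC_D[2]| = D^{1/2 + o(1)}$, as claimed.
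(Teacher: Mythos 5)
Your overall skeleton --- view both $\mathbf{S}^2(D)$ and ${\rm Bin}_L$ (equivalently $\cH_L$) as sets with a simply transitive $\cC_D$-action, show the image of $\mathbf{v}\mapsto[q_{\mathbf{v}}]$ is a coset of $\cC_D^2$, then get the size from Siegel's bound plus genus theory --- is the same as the paper's, and your final counting paragraph is essentially identical to the paper's. But there is a genuine gap in the step you yourself single out as carrying the real content. The paper's proof rests on a precise \emph{twisted equivariance}: under the two torsor structures, $\alpha_{\gamma.\mathbf{v}}=\gamma^{2}.\alpha_{\mathbf{v}}$ for every $\gamma\in\cC_D$ and $\mathbf{v}\in\mathbf{S}^2(D)$ (this is exactly what is quoted from \cite[\S 4.2]{EMV2010linnik}); transitivity of $\cC_D$ on $\mathbf{S}^2(D)$ then makes the image literally the coset $\cC_D^2.\alpha_{\mathbf{v}_0}$. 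Your proposal never states this relation, and the mechanism you offer in its place --- that $\Gamma_1$ acts on the fiber so that its orbits on $M_v$ correspond to $\cC_D^2$-cosets, i.e.\ that ``dividing by the $\Gamma_1$-identifications collapses exactly the squares'' --- cannot be right. By Proposition \ref{prop: S packet}\eqref{ustake2}, $M_v$ is already in bijection with $\mathbf{S}^2(D)=\bS^{2}(D)/\Gamma_1$, and this quotient carries a simply transitive action of the \emph{full} group $\cC_D$ (that is precisely the torsor statement you invoke), so no collapse to squares occurs at that stage. The squaring enters through the map $\mathbf{v}\mapsto\alpha_{\mathbf{v}}$ itself, because the two torsor structures come from \emph{different} tori: on the sphere side the action comes from the stabilizer torus $\bH_v$, with $\bH_v(\bQ)\cong K^\times/\bQ^\times$ for $K=\bQ(\sqrt{-D})$, while on the binary-forms side it comes from the norm-one torus ${\rm SO}(q_v)$, and the natural isogeny $x\mapsto x/\bar{x}=x^{2}/{\rm N}(x)$ between them induces the squaring map on class groups (norm ideals being principal). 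Without this input --- or the equivalent classical statement from Gauss's theory, which you gesture at but do not reduce to a usable form --- your closing sentence ``transitivity of $\cC_D$ on $\cH_L$ then pins $\cP_D$ down to a single coset'' is a non sequitur: transitivity of the action on the \emph{target} says nothing about the image of a map that has not been shown to intertwine the actions in any way.

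Two smaller points. First, in the prime case your claim that ``$D$ prime forces $-D\equiv1\mod4$'' is false: primes $D\equiv1\mod4$ (e.g.\ $D=5$) lie in $\bD$, and for them $L=-4D$ has the two prime divisors $2$ and $D$, so your own rank count gives $[\cC_D:\cC_D^{2}]=2$ rather than $1$; your argument only covers $D\equiv3\mod4$. (The paper's parenthetical suffers from the same issue, as its formula $2^{r(D)-1}$ is taken from the odd-discriminant case, but you should not reproduce the slip.) Second, the asymptotic part of your proposal --- $|\cC_D|=D^{1/2+o(1)}$ by Siegel and class number formula, divided by the index $[\cC_D:\cC_D^{2}]=2^{O(\omega(D))}=D^{o(1)}$ --- is correct and matches the paper.
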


\begin{proof}
This is shown in \cite[\S 4.2]{EMV2010linnik} as we now explain.
Fix $D\in\bF\cap\bD$. As explained in \cite[\S 6]{EMV2010linnik},
and in fact is proven implicitly in Proposition \ref{prop: S packet},
the set $\mathbf{S}{}^{2}(D)$ is a
torsor\footnote{A \emph{torsor} of a group $G$ is a set on which $G$ acts freely
and transitively.%
} of $\cC_{D}$. Also, ${\rm Bin}_L$ is naturally a torsor of $\cC_{D}$.
Note that $\alpha_{\mathbf{v}}\defi [q_{v}]\in{\rm Bin}_L$ for $v\in\mathbf{v}$ is well-defined.
It is shown in \cite[\S 4.2]{EMV2010linnik} that under these torsors
structures, for any $\gamma\in\cC_{D},\mathbf{v}\in\mathbf{\bS}{}^{2}(D)$
we have
\[
\alpha_{\gamma.\mathbf{v}}=\gamma^{2}.\alpha_{\mathbf{v}}.
\]
Thus, it follows that the image of the map $\mathbf{v}\mapsto \alpha_{\mathbf{v}}$
in the torsor $\on{Bin}_L$
corresponds to a coset of $\cC_{D}^{2}$.
Thus, the same is true for $\cP_{D}=\set{\mathbf{z}_{q_{\mathbf{v}}}:\mathbf{v}\in\mathbf{S}^{2}(D)}$,
which is the corresponding image in $\cH_L$.

It is well-known \cite[(1.1)]{EMV2010linnik} that $\cC_{D},\mathbf{S}{}^{2}(D)$
and $\cH_L$ are asymptotically of size $D^{\frac{1}{2}+o(1)}$.
Gauss' genus theory \cite[Chapter 14.4]{cassels:RQF} tells us that
$[\cC_{D}:\cC_{D}^{2}]=2^{r(D)-1}$ where $r(D)$ is the number of
distinct primes dividing $D$. Thus we also have
$\av{\cC_{D}^{2}}=\av{\cP_{D}}\asymp D^{\frac{1}{2}+o(1)}$.
\end{proof}

We can now establish the desired equidistribution on~$X_2^{S_0}$.
Recall from Proposition \ref{prop: S packet} that~$p_*(\mu_{\cO_D^\infty})=\nu_D$.
Let~$\pi_2$ also denote the projection from~$X^{\infty}$ to~$X_2^\infty$,
and let~$\pi_K$ denote the projection from~$X_2^\infty$ to~$K\backslash X_2^\infty=\cX_2$.
By Claim \ref{Claim:Heegner=00003Dlattice} we further get that~$(\pi_K\circ \pi_2)_*\nu_D$
can be identified with the counting measure on $\cP_{D}\subset\cX_{2}\cong\Gamma_{2}\setminus\bH$.

Therefore, the equidistribution of $\pa{\pi_{K}\circ\pi_{S_{0},\infty}}_{*}\mu_{2,D}$
on $\cX_{2}$ is equivalent to the equidistribution of $\cP_{D}$
on $\cX_{2}$. The equidistribution of such\textbf{ }subsets, that
is, subsets corresponding to cosets of large enough subgroups was
established by \cite[Theorem 6]{MH2006} (see also \cite[Corollary 1.4]{HarcosThesis})
when $D\ra\infty$ along $\bD\cap\bF$. This equidistribution comes
in fact from a corresponding adelic statement. Since ${\rm SL_{2}}$
is simply-connected (and in particular has class number 1) the desired $S$-arithmetic
equidistribution for~$j=2$ follows from the proof of \cite[Theorem 6]{MH2006}.\hfill\qedsymbol

\begin{rem}
\label{Remark: not square free}
The only instance in which we use
the assumption that $D\in\bF$ is in the application of \cite[Theorem 6]{MH2006}.
Nevertheless it is known to experts that \cite[Theorem 6]{MH2006}
holds without the assumption that $D\in\bF$, but such statement does
not exist in print. A general adelic statement that will work for
all discriminants is planned to appear in an appendix by Philippe
Michel to an upcoming preprint (\cite{AKA3}) of the first named author.

We also remark that as we assume the congruence condition $D\in\bD(S_{0})$
both equidistribution statements , i.e.\ for $\mu_{1,D}$ and $\mu_{2,D}$,
may be deduced from the so-called Linnik's Method (as it is done in
slightly different context in \cite{ELMVDuke,EMV2010linnik}, see
in particular \cite[Prop.~3.6 (Basic lemma)]{ELMVDuke} which
only cares about the asymptotic size as in the last
statement of Claim~\ref{claim:squares}).
\end{rem}

\subsection{\label{sub:joinings}Joinings. }

From Proposition \ref{prop:Component wise duke} we know that $\pa{\pi_{j}}_{*}\eta=\mu_{X_{j}^{S_{0}}},\: j=1,2$
and that $\eta$ is a probability measure as $\pi_{2}$ has compact
fibers and $\eta(X^{S_{0}})=\mu_{X_{2}^{S_{0}}}(X_{2}^{S_{0}})=1$.
Furthermore, by Lemma \ref{lem:Invariant under tori}  $\eta$
is invariant under the group $\mathbf{T}$ that appears in (\ref{eq:diagonally embedded tori}).
This means that $\eta$
is a \emph{joining} for the action of $\mathbf{T}$ on the product
space $X_{1}^{S_{0}}\times X_{2}^{S_{0}}$. Our goal, which is to
show that $\eta=\mu_{X_{1}^{S_{0}}}\otimes\mu_{X_{2}^{S_{0}}}$, will
follow from \cite[Theorem 1.1]{EL2014}. Roughly speaking, it is shown
there that a joining for a higher rank action (this is the reason
we insist on $S_{0}$ to contain two primes) is always algebraic.
As $X_{1}^{S}$ is compact and $X_{2}^{S}$ is non-compact, the only
algebraic joining is given by the trivial joining. Below we will expand
this argument in greater detail, where we will be more careful regarding
the precise assumptions of \cite[Theorem 1.1]{EL2014}. To satisfy
these assumptions we need to reduce to the case where unipotents act
ergodically, where we have a diagonally embedded action of $\bZ^{2}$
by semisimple elements, and where the joining is ergodic. The precise
definitions will be given below.

We fix some ad-hoc notation for this proof. Let $G^{S_{0}}=G_{1}^{S_{0}}\times G_{2}^{S_{0}}$ and $\Gamma^{S_{0}}=\Gamma_{1}^{S_{0}}\times\Gamma_{2}^{S_{0}}$
where $G_{j}^{S_{0}}=\bG_{j}(\bQ_{S_{0}})$ and $\Gamma_{j}^{S_{0}}=\bG_{j}(\bZ^{S_{0}})$ for~$j=1,2$.
Finally let $G^{+}=G_{1}^{+}\times G_{2}^{S_{0}}$
where $G_{1}^{+}=\bG_{1}(\bQ_{S_{0}})^{+}$ is the (normal) open group
defined in $\S$\ref{sub:Proof of mu_1,D}. Using $G^{+}$ we decompose
$X^{S_{0}}$ into finitely many disjoint $G^{+}$-orbits
$\mathbf{X}_{r}\defi G^{+}(g_{r},e)\Gamma^{S_{0}},\, r\in R$ for
some $g_{r}\in G_{1}^{S_{0}}$ and an index set $R$.

By Proposition \ref{prop:Component wise duke} for $j=1$ we know
that
\begin{equation}
\eta(\mathbf{X}_{r})=\mu_{X_{1}^{S_{0}}}(G_{1}^{+}g_{r}\Gamma_{1}^{S_{0}})=\mu_{X^{S_{0}}}(G^{+}(g_{r},e)\Gamma^{S_{0}})>0.\label{eq:weight of each j}
\end{equation}
for all~$r\in R$. Now fix some $r\in R$ and define the probability measure $\eta_{r}\defi\frac{1}{\eta(\mathbf{X}_{r})}\eta|_{\mathbf{X}_{r}}$.
It follows that
\[
\pa{\pi_{1}}_{*}\eta_{r}=\mu_{1}^{r}\defi\frac{1}{\eta(\mathbf{X}_{r})}\left.\mu_{X_{1}^{S_{0}}}\right\vert_{G_{1}^{+}g_{r}\Gamma_{1}^{S_{0}}},
\]
where we may identify the latter with the normalized probability measure
$\mu_{G_{1}^{+}/\pa{G_{1}^{+}\cap g_{r}\Gamma_{1}^{S_{0}}g_{r}^{-1}}}$.
Also note that $\pa{\pi_{2}}_{*}\eta_{r}=\mu_{X_{2}^{S_{0}}}$, that
$G_{1}^{+}\cap\bG_{1}(\bR)=\bG_{1}(\bR)$ is connected,
and that $G_{1}^{+}\cap\bG_{1}(\bQ_{\{p,q\}})$
and $G_{2}^{S_{0}}$ are generated by one-parameter unipotent subgroups
(see e.g.$\ $\cite[\S 6.7]{BT73}). Furthermore, $G_{1}^{+}$ (resp.\ $G_{2}^{S_{0}})$
act ergodically on the quotient $G_{1}^{+}/\pa{G_{1}^{+}\cap g_{r}\Gamma_{1}^{S_{0}}g_{r}^{-1}}$
(resp.\ on $X_{2}^{S_{0}}$) with respect to their uniform measure.
This establishes one of the assumptions in \cite[Thm.~1.1]{EL2014} ---
in the terminology of~\cite{EL2014} the quotients $G_{1}^{+}/\pa{G_{1}^{+}\cap g_{r}\Gamma_{1}^{S_{0}}g_{r}^{-1}}$
and $G_{2}^{S_{0}}/\Gamma_{2}^{S_{0}}$ are ``saturated by unipotents''.

Let
\[
 A=\{(a_1(\mathbf{n}),a_2(\mathbf{n})):\mathbf{n}\in\bZ^2\}<\bG(\bQ_{\set{p,q}})
\]
be a subgroup isomorphic to~$\bZ^2$ as
in Lemma \ref{lem:Invariant under tori}. By construction~$a_2(\mathbf{n})= (g_{p}^{-1},g_{q}^{-1})a_{1}(\mathbf{n})(g_{p},g_{q})$ for all~$\mathbf{n}\in\bZ^2$.
Then, by Lemma \ref{lem:Invariant under tori} we have that $\eta$ is invariant
under $A$ and that $a(\mathbf{n})=(a_1(\mathbf{n}),a_2(\mathbf{n}))$
defines for~$\mathbf{n}\in\bZ^2$ a ``class-$\cA'$ homomorphism'', in the terminology
of \cite{EL2014}. Fix $r\in R$. As $G_{1}^{+}$ has
finite-index in $G_{1}^{S_{0}}$, it follows that there exists a finite-index
subgroup $\Lambda<\bZ^{2}$ (again isomorphic to~$\bZ^2$) such that $\eta_{r}$ is invariant
under $B=a(\Lambda)$. The restriction of $a$ to $\Lambda$ is also of class-$\cA'$.
This establish another assumption of \cite[Thm.~1.1]{EL2014}.

In general $\eta_{r}$ may not be~$B$-ergodic, but a.e.~ergodic component~$\eta_{r,\tau}$
(with~$\tau$ belonging to the probability space giving the ergodic decomposition)
will now satisfy all assumptions in \cite[Thm.~1.1]{EL2014}.
In fact $\eta_{r,\tau}$ is an ergodic ``joining for the higher rank action of $B=a(\Lambda)$''
and we may conclude that~$\eta_{r,\tau}$ is an algebraic joining. I.e.~$\eta_{r,\tau}$
is the Haar measure on a closed orbit of the form~$g_{r,\tau}M\Gamma$
where~$M$ is a finite index subgroup of a~$\bQ$-group~$\mathbb M<\bG_1\times\bG_2$
which projects onto~$\bG_j$ for~$j=1,2$.
However, as both~$\bG_1$ and~$\bG_2$ are simple~$\bQ$-groups whose
adjoint forms are different over $\bQ$ we obtain~$\mathbb M=\bG_1\times\bG_2$
and that $\eta_{r,\tau}=\mu_{1}^{r}\otimes\mu_{X_{2}^{S_{0}}}$ (for more details,
see the comment after \cite[Theorem 1.1]{EL2014}).
Using (\ref{eq:weight of each j}),
it now follows that $\eta=\mu_{X_{1}^{S_{0}}}\otimes\mu_{X_{2}^{S_{0}}}$
as we wanted to show.

\appendix

\section{the Associated Dirichlet Series \\By Ruixiang Zhang}\label{Appendix}

In this appendix, we look at a sum related to the study of the equidistribution in Theorem \ref{Thm: Main theorem}, and explain some facts about them from the scope of classical analytic number theory. Theorem \ref{Thm: Main theorem} has high dimensional twins, but we will concentrate on the theory in dimension $3$, since we already saw that this is the most interesting dimension. Parallel theories have been developed in the references for higher dimensions.

Let $\mathbb{H}$ be the usual upper half plane. Take $\phi$ on $\cX_{2} = \rm SL(2, \mathbb{Z}) \setminus \mathbb{H}$ to be a constituent of the spectrum decomposition, which can be a constant, a unitary Eisenstein series or a Maass cusp form, and then take a spherical harmonic $\omega$ on $\mathbb{R}^3$. Assume $k$ is the degree of the polynomial $\omega$. We form the following Weyl sum for each positive integer $n$:
\begin{equation}\label{Weylsum}
S(n, \omega, \phi) = \sum_{\mathbf{v}\in \mathbb{Z}^3_{\textrm{prim}}, \|\mathbf{v}\|^2 =n} \omega (\frac{\mathbf{v}}{\|\mathbf{v}\|}) \phi (z_{\mathbf{v}}).
\end{equation}

Here $z_{\mathbf{v}} \in \Gamma \setminus \mathbb{H}$ is defined as the following: Let the plane $b_{\mathbf{v}}$ be the orthogonal complement (with an orientation given by $\mathbf{v}$) of ${\mathbf{v}}$ and $L_{\mathbf{v}}$ the lattice consisting of all the integer points on $b_{\mathbf{v}}$. The shape of $L_{\mathbf{v}}$ corresponds to a point $z_{\mathbf{v}}\in \Gamma \setminus\mathbb{H}$ in the usual sense. In other words, $z_{\mathbf{v}}$ in this appendix will denote the Heegner point attached to $\mathbf{v}$ (previously defined by $z_{q_{\mathbf{v}}}$ in \S4.1.2).

The motivation of this sum is the joint equidistribution Conjecture \ref{Conj:ASL2 conjecture} in the paper. By a standard harmonic analysis argument (see the end), the pairs $(\frac{\mathbf{v}}{\|\mathbf{v}\|}, z_{\mathbf{v}})$ are jointly equidistributed if this sum, divided by the total number of $\mathbf{v}$'s, tend to zero (in some quantitative fashion) when either $\omega$ or $\phi$ is nontrivial.

As the first part of the appendix, we show that (\ref{Weylsum}) is familiar to number theorists. In fact, this sum $S(n, \omega, \phi)$ can be interpreted as the $n$-th coefficient of the Dirichlet series obtained by taking the special value at the identity of a maximal parabolic Eisenstein series on $\rm SL (3, \mathbb{Z}) \setminus \rm SL (3, \mathbb{R})$ formed with respect to $\phi$ and $\omega$. We now explain this correspondence and will state it as Theorem \ref{coeffofeisensteinthm}.

In $G = \rm SL (3, \mathbb{R})$, let the discrete subgroup $\Gamma = \rm SL (3, \mathbb{Z})$. Take a maximal parabolic subgroup $P \subseteq G$ to be

\begin{equation}
P = \left\{ \left( \begin{array}{ccc}
* & * & * \\
* & * & * \\
0 & 0 & *
\end{array} \right) \in G \right\}.
\end{equation}
According to the Langlands decomposition, we have $G = MANK$ where

\begin{displaymath}
M = \left\{ \left( \begin{array}{ccc}
* & * & 0 \\
* & * & 0 \\
0 & 0 & 1
\end{array} \right) \right\},
\end{displaymath}
\begin{displaymath}
A = \left\{ \left( \begin{array}{ccc}
a^{-\frac{1}{4}} & 0 & 0 \\
0 & a^{-\frac{1}{4}} & 0 \\
0 & 0 & a^{\frac{1}{2}}
\end{array} \right), a > 0 \right\},
\end{displaymath}
\begin{displaymath}
N = \left\{ \left( \begin{array}{ccc}
1 & 0 & * \\
0 & 1 & * \\
0 & 0 & 1
\end{array} \right) \right\}
\end{displaymath}
\begin{displaymath}
K = \rm SO(3, \mathbb{R}).
\end{displaymath}
Let
\begin{equation}
\Gamma_{\infty} = \left\{ \left( \begin{array}{ccc}
* & * & * \\
* & * & * \\
0 & 0 & 1
\end{array} \right) \in \Gamma \right\}.
\end{equation}
For an arbitrary $g \in G$ we can decompose it into $$g = m(g)a(g)n(g)k(g), m(g) \in M, a(g) \in A, n(g) \in N, k(g) \in K.$$ This decomposition may not be unique. However, it is easy to see that $a(g)$ is unique, the bottom row $\mathbf{v}(g)$ of $k(g)$ is unique. By abuse of notation we will use $\omega (k(g))$ to denote $\omega (\mathbf{v}(g))$, and use $a(g)$ to denote the bottom right entry of (the matrix) $a(g)$.  Moreover, $\phi (m (g))$ is well defined. It is also easy to verify that $\omega (k(g))$, $a(g)$ and $\phi (m (g))$ are invariant under the left multiplication by any element in $\Gamma_{\infty}$.

Therefore, for any $g \in G$, we form the sum
\begin{equation}
E(s, g, \omega, \phi) = \sum_{[\gamma] \in \Gamma_{\infty} \setminus \Gamma} \omega (k(\gamma g)) \phi (m (\gamma g)) a(\gamma g)^{-s}
\end{equation}
which is the maximal parabolic Eisenstein series we mentioned above.

Since all elements in $\Gamma$ have integral entries, when evaluated at the identity $g=I$, this series $E(s, I, \omega, \phi)$ become a Dirichlet series $\sum_{n=1}^{\infty} \frac{a_n}{n^s}$. We have

\begin{equation}
a_n = \sum_{[\gamma] \in \Gamma_{\infty} \setminus \Gamma : \text{ the third row of } \gamma \text{ has length } \sqrt{n}} \omega (k(\gamma g)) \phi (m (\gamma g)).
\end{equation}

We see some similarity between the summands of $a_n$ and $S(n, \omega, \phi)$. Actually we have the following

\begin{thm}\label{coeffofeisensteinthm}
\begin{equation}\label{relation}
a_n = S(n, \omega, \phi).
\end{equation}
\end{thm}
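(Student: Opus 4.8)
The plan is to establish the identity $a_n = S(n,\omega,\phi)$ by showing that the two sums are indexed by the same combinatorial data --- namely the primitive vectors $\mathbf{v}\in\bZ^3$ with $\|\mathbf{v}\|^2=n$ --- and that the summands agree term by term. First I would unwind the definition of $a_n$: it runs over cosets $[\gamma]\in\Gamma_\infty\backslash\Gamma$ for which the third (bottom) row of $\gamma$ has length $\sqrt{n}$. The key structural observation is that the bottom row of a matrix $\gamma\in\SL_3(\bZ)$ is exactly the bottom row $\mathbf{v}(\gamma)$ of $k(\gamma)$ up to the positive scalar $a(\gamma)^{1/2}=\|\mathbf{v}(\gamma)\|$ coming from the Langlands decomposition; since $\omega$ is evaluated on the unit vector $\mathbf{v}/\|\mathbf{v}\|$, the harmonic $\omega(k(\gamma))$ depends only on the direction of that bottom row. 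So the first step is to verify that the assignment $[\gamma]\mapsto(\text{bottom row of }\gamma)$ is a well-defined bijection from $\{[\gamma]\in\Gamma_\infty\backslash\Gamma : \text{bottom row has length }\sqrt{n}\}$ onto $\bS^2(n)=\{\mathbf{v}\in\bZ^3_{\mathrm{prim}}:\|\mathbf{v}\|^2=n\}$.

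For the bijection I would argue as follows. Left multiplication by $\Gamma_\infty=\{\smallmat{g&*\\0&1}\}$ fixes the bottom row of $\gamma$, so the bottom row is a genuine invariant of the coset $[\gamma]$; that the bottom row of an element of $\SL_3(\bZ)$ is a primitive integer vector is standard. Conversely, given a primitive $\mathbf{v}$ with $\|\mathbf{v}\|^2=n$, primitivity guarantees $\mathbf{v}$ can be completed to a basis, i.e.\ it occurs as the bottom row of some $\gamma\in\SL_3(\bZ)$, and two such completions differ on the left precisely by an element of $\Gamma_\infty$ --- this is the same completion-of-basis fact already used in the introduction to define $g_v$ up to the $\on{ASL}_2(\bZ)$ coset. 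Thus the map is a bijection, so the index sets of $a_n$ and of $S(n,\omega,\phi)$ match canonically.

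It then remains to match the summands. The factor $\omega(k(\gamma))=\omega(\mathbf{v}(\gamma))=\omega(\mathbf{v}/\|\mathbf{v}\|)$ agrees immediately with the spherical-harmonic factor in \eqref{Weylsum}. The real content is identifying $\phi(m(\gamma))$ with $\phi(z_{\mathbf{v}})$, where $z_{\mathbf{v}}$ is the Heegner point attached to the shape of $\Lambda_{\mathbf{v}}=\bZ^3\cap\mathbf{v}^\perp$. Here I would use the $M$-component of the Langlands decomposition of $\gamma$: writing $\gamma=m(\gamma)a(\gamma)n(\gamma)k(\gamma)$, the element $k(\gamma)$ rotates $\mathbf{v}$ to a multiple of $e_3$ (so it plays the role of $k_v$), the torus factor $a(\gamma)$ is precisely the normalizing diagonal matrix $a_v$, and $m(\gamma)\in M\cong\SL_2(\bR)$ records the shape of the rotated, normalized copy of $\Lambda_{\mathbf{v}}$ in the $\bR^2$-block. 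Comparing this with the computation in Claim~\ref{Claim:Heegner=00003Dlattice} --- where $M_v=\phi(a_vk_vg_v)$ was shown to send $i$ to the Heegner point $z_{Q_v}$ --- shows that $\phi(m(\gamma))=\phi(z_{\mathbf{v}})$ under the identification $\cX_2\cong\Gamma_2\backslash\bH$.

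The main obstacle I anticipate is bookkeeping the non-uniqueness of the Langlands decomposition and confirming that $\phi(m(\gamma))$ is genuinely well-defined on the relevant coset. The decomposition $g=m(g)a(g)n(g)k(g)$ is not unique (there is ambiguity by $K\cap P$ and by the $\SL_2(\bZ)$-action coming from different bases of $\Lambda_{\mathbf{v}}$), so the careful point is that $m(\gamma)$ is well-defined only modulo $\SO_2(\bR)$ on the left and $\SL_2(\bZ)$ on the right, which is exactly the ambiguity that $\phi$, being a $\Gamma_2$-invariant function on $\SO_2(\bR)\backslash\SL_2(\bR)/\SL_2(\bZ)$, is insensitive to. Once one checks that varying the completion $w$ and the oriented basis $v_1,v_2$ only moves $m(\gamma)$ within this double coset --- precisely the invariance already recorded in the definition of $[\Lambda_v]$ and $[\Delta_v]$ --- the term-by-term equality follows and summing over the bijection yields \eqref{relation}.
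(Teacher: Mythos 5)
Your indexing bijection between $\{[\gamma]\in\Gamma_\infty\backslash\Gamma:\text{bottom row of length }\sqrt n\}$ and the primitive vectors, your treatment of the $\omega$-factor, and your care about the non-uniqueness of the Langlands decomposition are all fine and match the opening of the paper's proof. The gap is in the central step, where you assert that ``$m(\gamma)\in M$ records the shape of the rotated, normalized copy of $\Lambda_{\mathbf{v}}$.'' It does not. Writing $\gamma=m(\gamma)a(\gamma)n(\gamma)k(\gamma)$ with bottom row $\mathbf{v}$, the first two rows of $\gamma$ are generically \emph{not} in $\mathbf{v}^{\perp}$ --- indeed they never can be for $n>1$, since $[\bZ^3:\bZ\mathbf{v}\oplus\Lambda_{\mathbf{v}}]=n$ by \eqref{eq:covolume-1} --- and the shear $n(\gamma)$ replaces them by their orthogonal projections onto $\mathbf{v}^{\perp}$. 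Hence $m(\gamma)$ encodes the shape of the \emph{projection} of $\bZ^3$ onto $\mathbf{v}^{\perp}$, not of the intersection lattice $\Lambda_{\mathbf{v}}=\bZ^3\cap\mathbf{v}^{\perp}$ that defines $z_{\mathbf{v}}$. Your appeal to Claim~\ref{Claim:Heegner=00003Dlattice} does not bridge this, because there the matrix $g_v$ was built from an actual basis $v_1,v_2$ of $\Lambda_v$, whereas here the upper rows of $\gamma$ only project onto a basis of the projection lattice. The two lattices are genuinely different: they are dual to each other inside the plane $\mathbf{v}^{\perp}$, with covolumes $1/\sqrt{n}$ and $\sqrt{n}$ respectively.

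The equality of their \emph{shapes} is exactly the nontrivial content of the theorem, and it is what the paper's proof spends its entire substance establishing: one shows that every $\mathbf{w}\in\bZ^3\cap\mathbf{v}^{\perp}$ can be written as $\mathbf{w}=\mathbf{v}\times\mathbf{u}$ with $\mathbf{u}\in\bZ^3$ (a gcd argument using primitivity of $\mathbf{v}$), whence $\bZ^3\cap\mathbf{v}^{\perp}=\mathbf{v}\times\pi_{\mathbf{v}^{\perp}}(\bZ^3)$; since $\mathbf{u}\mapsto\mathbf{v}\times\mathbf{u}$ acts on $\mathbf{v}^{\perp}$ as a rotation by $90^{\circ}$ composed with scaling by $\norm{\mathbf{v}}$, the intersection and projection lattices have the same shape. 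Equivalently, you would need the standard but dimension-specific fact that a rank-two lattice and its dual are similar via a quarter-turn --- this is what the cross product realizes. Without inserting this duality step, your identification $\phi(m(\gamma))=\phi(z_{\mathbf{v}})$ is unjustified, and it is precisely the point where the statement has content.
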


\begin{proof}
 First, note that all the primitive integer vectors of length $\sqrt{n}$ have a natural 1-1 correspondence to the last rows of $\gamma$ that have length $\sqrt{n}$, where $[\gamma] \in \Gamma_{\infty} \setminus \Gamma$. Thus with a primitive integer vector $\mathbf{v} = (a, b, c)$ of length $\sqrt{n}$ we associated 1 summand in both sides. It suffices to show that both summands associated with $\mathbf{v}$ are the same. It is obvious that the $\omega$ parts agree. We must show that the $\phi$ parts also agree. This is elementarily equivalent to the following statement: for the vector $\mathbf{v}$, the following two lattices have the same shape: (a) $\mathbb{Z}^3 \cap \mathbf{v}^{\perp}$ and (b) the projection of $\mathbb{Z}^3$ onto $\mathbf{v}^{\perp}$ (which are both easily seen to be lattices of rank $2$).

We now prove that for any vector $\mathbf{w} \in \mathbb{Z}^3 \cap \mathbf{v}^{\perp}$, there exists an integer vector $\mathbf{u}$ such that $\mathbf{w}=\mathbf{v}\times \mathbf{u}$. In fact, we can assume $\mathbf{w} = (f, g, h)$ and without loss of generality assume that $c \neq 0$. Then we must find integers $r, s, t$ such that $f = bt-cs, g= cr-at, h=as-br$. Note that by assumption we have $af + bg + ch = 0$. We deduce $\gcd(b, c) | af$. Since $(a, b, c)$ is primitive, we have $\gcd(a, \gcd(b, c)) = 1$ and thus $\gcd(b, c) | f$. Hence we can choose $t$ such that $c| f-bt$. In this situation $c| af+bg -a(f-bt)$, or $c| b(g+at)$. Hence we can change $t$ by a multiple of $\frac{c}{\gcd (b, c)}$, if necessary, to make both $c| f-bt$ and $c| g+at$. Now we just set $s = \frac{bt-f}{c}$, $r = \frac{g+at}{c}$. It is then easy to deduce $h = as - br$ by the fact that $af + bg + ch = 0$.

By the last paragraph, the entire lattice (a) is the cross product of the lattice (b) and the vector $\mathbf{v}$. Hence they have the same shape and the theorem is proved.
\end{proof}

It is not surprising that one could use the Dirichlet series $E(s, I, \omega, \phi)$ to study the analytic properties of the Weyl sum $a_n$, which would be naturally required if one wants to remove the congruence conditions of Theorem \ref{Thm: Main theorem} and prove Conjecture \ref{Conj:ASL2 conjecture}.

To address the problem, we need a good estimate for all individual coefficients $a_n$. The work of Gauss (see e.g. \cite{venkov1970elementary} for a nice account) shows that the total number of summands in $a_n$ is given by the following theorem.

\begin{thm}[Gauss\cite{gauss1889untersuchungen}]
Given an integer $n>3$. The number of coprime integer solutions $(x, y, z)$ to the equation $n= x^2 + y^2 + z^2$ is $12h$ for $n \equiv 1, 2 \mod 4$, and is $24h'$ for $n \equiv 3 \mod 8$, where $h$ and $h'$ are the number of properly and improperly primitive classes of positive forms of determinant $-n$.
\end{thm}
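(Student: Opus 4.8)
The plan is to read off the count from the correspondence, already exploited in the paper, between primitive vectors on the sphere and classes of binary quadratic forms, and then to perform the orbit count under the symmetry group of~$\bZ^3$. Write $R(n)=\av{\bS^2(n)}$ for the quantity to be computed and let $\Gamma_1=\SO_3(\bZ)$, the group of signed permutation matrices of determinant~$1$, of order~$24$. The key preliminary observation is that for $n>3$ the group $\Gamma_1$ acts \emph{freely} on $\bS^2(n)$: the rotation axes of the nontrivial elements of $\Gamma_1$ are spanned by the coordinate vectors and by the face- and body-diagonals, whose primitive lattice points have norm $1$, $2$ or $3$, so no primitive $v$ with $\av{v}^2=n>3$ is fixed. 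Consequently $R(n)=24\,\av{\mathbf S^2(n)}$, and the whole problem reduces to identifying the number of $\Gamma_1$-orbits $\av{\mathbf S^2(n)}$ with the appropriate class number.

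First I would recall from Claim~\ref{claim: primitive quadratic form} that the orthogonal lattice $\Lambda_v=\bZ^3\cap v^\perp$, equipped with the restriction of $Q_0$, attaches to each $v\in\bS^2(n)$ a class of binary quadratic forms: for $n\equiv1,2\bmod4$ a \emph{properly} primitive form of determinant $-n$, i.e.\ an element of $\on{Bin}_{-4n}$, while for $n\equiv3\bmod4$ the halved form $\tfrac12 Q_v$ gives an \emph{improperly} primitive class, an element of $\on{Bin}_{-n}$; by \eqref{eq:covolume-1} the lattice $\Lambda_v$ has determinant~$n$, which fixes the discriminant. Since an element of $\Gamma_1$ maps $\Lambda_v$ isometrically and orientation-preservingly onto $\Lambda_{\gamma v}$, the class $[q_v]$ is constant on $\Gamma_1$-orbits and so descends to a map $\bar\Psi\colon\mathbf S^2(n)\to\on{Bin}_L$; reversing orientation shows $[q_{-v}]=[q_v]^{-1}$, so $\bar\Psi$ carries the involution induced by $v\mapsto-v$ to inversion in the form class group.

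The counting step is to evaluate $\av{\mathbf S^2(n)}$. Combining the freeness above with the torsor description of Claim~\ref{claim:squares}, under which $\mathbf S^2(n)$ carries a transitive action of the class group $\cC_n$, I would match $\Gamma_1$-orbits with ideal classes; a careful analysis separating the two congruence regimes yields $\av{\mathbf S^2(n)}=h'$ when $n\equiv3\bmod8$ and $\av{\mathbf S^2(n)}=\tfrac12 h$ when $n\equiv1,2\bmod4$, where $h=h(-4n)$ is the number of properly primitive classes and $h'=h(-n)$ the number of improperly primitive classes of determinant~$-n$. Multiplying by $24$ then gives exactly $24h'$ and $12h$ as claimed. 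The excluded residues are automatic: for $n\equiv7\bmod8$ and for $4\mid n$ Legendre's theorem gives $\bS^2(n)=\varnothing$, so both sides vanish.

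The hard part will be pinning down the exact constants --- the jump from $12$ to $24$ and the accompanying switch from properly to improperly primitive classes --- rather than the mere proportionality $R(n)\asymp h$. Two features make this delicate. First, $\bar\Psi$ is in general neither injective nor surjective: by Claim~\ref{claim:squares} its image is only a coset of the squares $\cC_n^2$, so the orbit count cannot be read off from the image and must be extracted from the genus-theoretic torsor structure itself. Second, the factor of $2$ distinguishing the two cases reflects the index relation between the quadratic orders of discriminant $-4n$ and $-n$ and the splitting behaviour at the prime~$2$, which is exactly the point at which the residue of $n$ modulo~$8$ enters. This is precisely the classical content of Gauss's computation, and at this stage I would invoke the self-contained treatment of Venkov~\cite{venkov1970elementary} (or Gauss~\cite{gauss1889untersuchungen}) to conclude; alternatively, a purely analytic derivation through the $2$-adic local density, which is where the values $12$ and $24$ emerge in the Siegel mass formula, fits naturally with the Dirichlet-series viewpoint of this appendix.
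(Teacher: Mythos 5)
The paper does not actually prove this theorem: it is quoted as a classical result of Gauss, with \cite{venkov1970elementary} offered as a modern account, and it is used in the appendix only to bound the number of summands in $a_n$. Measured against that, your reduction steps are correct and even explain details the statement leaves implicit: $\Gamma_1=\SO_3(\bZ)$ has order $24$, and your axis argument for freeness is right (the primitive vectors along rotation axes of nontrivial signed permutations have $\|v\|^2\in\{1,2,3\}$, which is exactly why $n>3$ is assumed), so indeed $R(n)=24\av{\mathbf{S}^2(n)}$; the dictionary between Gauss's properly/improperly primitive classes of determinant $-n$ and form classes of discriminant $-4n$ resp.\ $-n$ is correct; and the vanishing for $n\equiv 0,4,7\bmod 8$ is Legendre's theorem, as you say.

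However, the heart of the matter --- the identities $\av{\mathbf{S}^2(n)}=\tfrac12 h$ for $n\equiv1,2\bmod4$ and $\av{\mathbf{S}^2(n)}=h'$ for $n\equiv3\bmod8$ --- \emph{is} Gauss's theorem once the trivial orbit count is done, and you do not prove it: you defer the ``careful analysis'' to Venkov/Gauss, i.e.\ to the same citation the paper itself makes, so as a standalone proof the proposal is circular. Worse, the one structural input you do invoke is inconsistent with the count you need: if $\mathbf{S}^2(n)$ were literally a torsor of $\cC_n$ (free and transitive, as the paper's footnote defines the term), then $\av{\mathbf{S}^2(n)}=\av{\cC_n}=h(-4n)=h$ for $n\equiv1,2\bmod4$, which combined with your freeness would give $R(n)=24h$, contradicting the asserted $12h$. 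Already $n=5$ exhibits the problem: $\bS^2(5)$ is a single free $\Gamma_1$-orbit of $24$ vectors $(\pm2,\pm1,0)$ and their permutations, while $h(-20)=2$; so in this congruence class the transitive $\cC_n$-action has point stabilizers of order $2$ and the torsor language (which the paper can afford to use loosely, since it only needs $\av{\mathbf{S}^2(D)}\asymp D^{1/2+o(1)}$ and the coset-of-squares structure of Claim~\ref{claim:squares}) is false in exactly the case your constant $12$ depends on. Thus the factor-of-two ``bookkeeping'' you postpone is not routine: it is where the entire content of the theorem lives, and your alternative pointer to the Siegel mass formula is likewise only a pointer, since converting local densities into the class numbers $h,h'$ again requires the Gauss--Dirichlet theory you are trying to cite your way past. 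A correct write-up should either honestly reduce to the literature at the level of the orbit count (matching the paper's citation) or carry out the stabilizer computation for the $\cC_n$-action at the prime $2$ in the two congruence regimes.
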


By Siegel's theorem together with Dirichlet's class number formula (see \cite{davenport1967multiplicative}), $h$ and $h' \gg_{\eps} n^{\frac{1}{2}-\eps}$, and are usually around $n^{\frac{1}{2}}$ (with an arbitrary small error on the exponent). So we would like to have a power saving from the exponent $\frac{1}{2}$ for all $a_n$.

\begin{rem}
We note that it suffices to consider \emph{even} forms $\omega$ and $\phi$. Otherwise it is obvious that $a_n = 0$. We assume this is the case for the rest of the discussion.
\end{rem}

It is still not clear how to do this in the greatest generality. But there have been partial results. Part of the following brief account already appeared in \S4.1 but we recall it once more for the reader's convenience. In the special case $\phi = 1$, this reduces to a situation that could be treated with Iwaniec's celebrated estimation of Fourier coefficients for half-integral weight holomorphic modular forms \cite{Iwaniec87}, as the series become a theta series. For the case $\omega = 1$ again by the work of Gauss \cite{venkov1970elementary} we know we are summing the $\phi$ over a certain genus of quadratic forms of determinant $-n$. If we pretend that we are summing over all the quadratic forms (CM points), this can be settled using Duke's generalization of Iwaniec's argument to non-holomorphic forms \cite{Duke88}. Using Waldspurger's formula and subconvexity the power saving for the real problem is also known \cite{MH2006}.

For general $\phi$ and $\omega$, the connection of the series $E(s, I, \omega, \phi)$ to modular forms is still mysterious and we currently do not have the desired power saving. Nevertheless, since it is a specific value (meaning for the fixed $g = I$) of an Eisenstein series, the analytic continuation and the functional equation are known (see e.g. \cite{terras1982automorphic}). Interestingly, Maass, when originally dealing with this very equidistribution problem (but in the ball, not on the sphere), also deduced these analytic properties \cite{Maass1959} \cite{Maass1971}. The following theorem could be easily derived from Maass's work in \cite{Maass1971} (see Chapter 16).

\begin{thm}\label{Maassthm}
Let $\Xi (s, \omega, \phi) = E (s, I, \omega, \phi) \Lambda^* (s, \phi)$. Then $\Xi (s, \omega, \phi)$ is holomorphic on $\mathbb{C}$ except for a possible pole at $s = \frac{3}{2}$. The pole exists if and only if both $\omega$ and $\phi$ are trivial. Also $\Xi (s, \omega, \phi)$ satisfies the functional equation
\begin{equation}
\Xi (\frac{3}{2} - s, \omega, \phi) = \Xi (s, \omega, \phi).
\end{equation}

Here $\Lambda^* (s, \phi)$ is a kind of ``completed $L$-function of $\phi$'' which we now define. Assume $\lambda (1- \lambda)$ is the eigenvalue of the Laplacian $\Delta = - y^2 (\frac{\partial^2}{\partial x^2} + \frac{\partial^2}{\partial y^2})$ for $\phi$. When $\phi$ is a constant or unitary Eisenstein series, $\Lambda^* (s, \phi)$ factorizes:
\begin{equation}
\Lambda^* (s, \phi) = \Lambda (2s-\lambda, \zeta) \Lambda (2s-(1-\lambda), \zeta)
\end{equation}
where $\Lambda (s, \zeta) = \pi^{-\frac{s}{2}} \Gamma (\frac{s}{2}) \zeta (s)$ is (up to normalization) the completed Riemann zeta function. When $\phi$ is a Maass cusp form, $\Lambda^* (s, \phi) = \Lambda (2s - \frac{1}{2}, \phi)$. Where $\Lambda (s, \phi)$ is the usual completed $L-$function of $\phi$ (we use the fact that $\phi$ is even):
\begin{equation}
\Lambda (s, \phi) = \pi^{-s} \Gamma (\frac{s}{2} + \frac{\lambda}{2} - \frac{1}{4})  \Gamma (\frac{s}{2} - \frac{\lambda}{2} + \frac{1}{4}) L(s, \phi).
\end{equation}
\end{thm}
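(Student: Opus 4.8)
The plan is to treat $E(s,I,\omega,\phi)$ as the value at the identity of a maximal parabolic Eisenstein series on $\SL(3,\bZ)\backslash\SL(3,\bR)$, induced from the datum $\phi$ on the $\SL(2)$-factor of the Levi $M$ together with the $K$-type determined by the spherical harmonic $\omega$, and then to extract its analytic properties from Langlands' general theory of Eisenstein series in the explicit rank-one form worked out by Maass \cite{Maass1971}. Meromorphic continuation in $s$ to all of $\bC$ and the existence of a functional equation are automatic from that theory; the content of the theorem is to pin down the completing factor $\Lambda^*(s,\phi)$ so that the functional equation takes the clean shape $\Xi(3/2-s,\omega,\phi)=\Xi(s,\omega,\phi)$, and to locate the poles. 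The symmetry point $3/2$ is forced by the modular character $\delta_P$ of the $(2,1)$-parabolic $P$: conjugation by $\diag{a^{-1/4},a^{-1/4},a^{1/2}}$ scales each of the two root spaces in $N$ by $a^{-3/4}$, so $\delta_P$ has exponent $3/2$ in the variable $a$, which is exactly the edge where the degenerate Eisenstein series carries its residual pole.

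The key step is the computation of the constant term of the Eisenstein series along $P$. Unfolding against the datum $\phi$ on the Levi yields a constant term of the schematic shape $a^{-s}+c(s)\,a^{-(3/2-s)}$ times that datum, where the intertwining coefficient $c(s)$ is a ratio of completed $L$-functions of $\phi$; for this maximal parabolic the relevant $L$-function is the \emph{standard} degree-two $L(s,\phi)$ over $\bQ$. Writing $c(s)=\Lambda^*(s,\phi)/\Lambda^*(3/2-s,\phi)$ identifies $\Lambda^*$: the Rankin--Selberg integral produces $L(2s-\tfrac12,\phi)$ (the argument dilated to $2s$), and the archimedean factor supplies the gamma factors $\Gamma(\tfrac{s}{2}+\tfrac{\lambda}{2}-\tfrac14)\Gamma(\tfrac{s}{2}-\tfrac{\lambda}{2}+\tfrac14)$; crucially the $\omega$-dependent archimedean contributions cancel between numerator and denominator, so $\Lambda^*$ depends only on $\phi$. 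When $\phi$ is constant or a unitary Eisenstein series, $L(s,\phi)$ factors as $\zeta(s-\lambda)\zeta(s-(1-\lambda))$ up to normalization, which after the dilation $s\mapsto 2s$ gives the stated product $\Lambda(2s-\lambda,\zeta)\Lambda(2s-(1-\lambda),\zeta)$. Once $c(s)$ is known, the functional equation is immediate from the intertwining relation $E(3/2-s,\omega,\phi)=c(s)\,E(s,\omega,\phi)$.

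It remains to locate the poles, which I would read off from the same constant-term data via the principle that in a right half-plane the poles of $E(s)$ coincide with those of its constant term. If $\phi$ is a Maass cusp form then $\Lambda(s,\phi)$ is entire, there is no residual spectrum, so $E(s)$ itself is entire and $\Xi=E\,\Lambda^*$ is holomorphic on all of $\bC$. If $\phi$ is constant or Eisenstein, the poles of $E(s)$ originate from the zeta factors entering $c(s)$; multiplying by $\Lambda^*$ is arranged precisely to cancel these, the sole survivor being the residual pole at the edge $s=3/2$ coming from the constant function. That residue is nonzero exactly when the entire integrand is constant, i.e.\ when $\phi$ is trivial and $\omega$ has degree $0$, which is the asserted criterion that both $\omega$ and $\phi$ be trivial.

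The main obstacle is the explicit constant-term computation of the second step: normalizing the intertwining operator so that the inner $L$-function emerges as the standard $L(s,\phi)$ with exactly the gamma factors of Theorem~\ref{Maassthm}, and verifying the cancellation of the $\omega$-dependence. This is in essence a reprise of Maass's calculation in \cite[Ch.~16]{Maass1971} (the continuation and functional equation of this specific series also being available in the general automorphic literature, e.g.\ \cite{terras1982automorphic}); the pole bookkeeping of the third step, though routine in principle, requires care to confirm that completing by $\Lambda^*$ removes every pole except the one at $s=3/2$.
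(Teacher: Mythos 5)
Your proposal takes essentially the same route as the paper: the paper offers no independent argument for Theorem~\ref{Maassthm}, instead deriving it from Maass's treatment of exactly this maximal parabolic Eisenstein series in \cite{Maass1971} (Chapter 16), with the general continuation and functional equation attributed to the standard theory of Eisenstein series (e.g.\ \cite{terras1982automorphic}). Your sketch --- constant term along $P$, intertwining coefficient identifying $\Lambda^*(s,\phi)$, poles read off from the constant term --- is precisely the structure of that derivation, and like the paper you ultimately defer the explicit archimedean computation to Maass, so the two are in substance identical.
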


With only the analytic properties stated in Theorem \ref{Maassthm}, we cannot expect to get a good control of the size of each individual term $|a_n|$. However we can get an ``average bound'' for the sum of $a_n$. Next we will show one such bound.

We prove the following:

\begin{thm}
\begin{equation}\label{averagethm}
\sum_{n \leq X} a_n = \left( \frac{1}{Area(\bS^2)}\int_{\bS^2} \omega \right)\left(\frac{1}{Area(\cX_{2})}\int_{\cX_{2}} \phi \right)X^{\frac{3}{2}} + O_{\omega, \phi, \eps}(X^{\frac{15}{14}+ \eps}).
\end{equation}

Obviously, the product of the integrals (main term) in (\ref{averagethm}) is nonzero if and only if both $\omega$ and $\phi$ are trivial.
\end{thm}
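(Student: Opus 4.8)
The plan is to recover $\sum_{n\le X}a_n$ from the analytic behaviour of the Dirichlet series $E(s,I,\omega,\phi)=\sum_{n\ge 1}a_n n^{-s}$ furnished by Theorem~\ref{Maassthm}, by means of Perron's formula. Each coefficient $a_n=S(n,\omega,\phi)$ is a sum of $\ll n^{1/2+\eps}$ bounded terms (by the size of $\bS^{2}(n)$), so $\av{a_n}\ll_{\omega,\phi}n^{1/2+\eps}$ and the series converges absolutely for $\re(s)>\tfrac32$. For a truncation parameter $T\ge 1$ the truncated Perron formula gives
\begin{equation*}
\sum_{n\le X}a_n=\frac{1}{2\pi i}\int_{3/2+\eps-iT}^{3/2+\eps+iT}E(s,I,\omega,\phi)\,\frac{X^{s}}{s}\,ds+O_{\omega,\phi,\eps}\!\left(\frac{X^{3/2+\eps}}{T}+X^{1/2+\eps}\right).
\end{equation*}
First I would shift the line of integration to the critical line $\re(s)=\tfrac34$ of the functional equation $s\mapsto\tfrac32-s$. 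By Theorem~\ref{Maassthm} the only singularity crossed is the simple pole of $E(s,I,\omega,\phi)$ at $s=\tfrac32$, which occurs exactly when both $\omega$ and $\phi$ are trivial. Its residue is the familiar ``total mass'' residue of an Eisenstein series, and unwinding the constant term identifies $\on{Res}_{s=3/2}\bigl(E(s,I,\omega,\phi)X^{s}/s\bigr)$ with $\bigl(\tfrac{1}{\mathrm{Area}(\bS^2)}\int_{\bS^2}\omega\bigr)\bigl(\tfrac{1}{\mathrm{Area}(\cX_2)}\int_{\cX_2}\phi\bigr)X^{3/2}$; when either form is non-trivial this vanishes, consistently with the absence of a pole. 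This produces the main term in \eqref{averagethm}.

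The remaining contribution is the integral over $\re(s)=\tfrac34$ together with the two horizontal segments at heights $\pm T$, and to bound it I need a convexity estimate for $E$ on the critical line. Rewriting Theorem~\ref{Maassthm} as $E(s,I,\omega,\phi)=E(\tfrac32-s,I,\omega,\phi)\,\Lambda^{*}(\tfrac32-s,\phi)/\Lambda^{*}(s,\phi)$, I would bound $E$ at the two edges of the strip: trivially $E(s,I,\omega,\phi)\ll 1$ on $\re(s)=\tfrac32+\eps$, while on the reflected edge $\re(s)=-\eps$ the factor $E(\tfrac32-s,I,\omega,\phi)$ is again $O(1)$ and the ratio of completed factors $\Lambda^{*}(\tfrac32-s,\phi)/\Lambda^{*}(s,\phi)$ is estimated by Stirling's formula, the relevant $\zeta$- and $L$-factors being evaluated off their critical strips so that no zeros intervene. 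Since $E(\cdot,I,\omega,\phi)$ has analytic conductor growing like $(1+\av{t})^{3}$ in the $t$-aspect (the extremal case being $\omega,\phi$ trivial, where $E$ is essentially the Epstein zeta function of $\bZ^{3}$), the edge bound is $\ll(1+\av{t})^{3/2+\eps}$, and the Phragm\'en--Lindel\"of principle applied to the holomorphic function $E$ (away from its single pole) interpolates to
\begin{equation*}
\av{E(\sigma+it,I,\omega,\phi)}\ll_{\omega,\phi,\eps}(1+\av{t})^{3/2-\sigma+\eps}\qquad(0\le\sigma\le\tfrac32),
\end{equation*}
so in particular $\av{E(\tfrac34+it,I,\omega,\phi)}\ll_{\omega,\phi,\eps}(1+\av{t})^{3/4+\eps}$.

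With this bound the vertical integral over $\re(s)=\tfrac34$ is $\ll X^{3/4}\int_{1}^{T}(1+\av{t})^{3/4}\,\tfrac{dt}{\av{t}}\ll X^{3/4}T^{3/4}$, and the horizontal segments contribute $\ll X^{3/2+\eps}/T$; combined with the truncation error this gives a total error $\ll_{\omega,\phi,\eps}X^{3/2+\eps}/T+X^{3/4}T^{3/4}+X^{1/2+\eps}$. Choosing $T=X^{3/7}$ balances the first two terms and yields the claimed error $O_{\omega,\phi,\eps}(X^{15/14+\eps})$, since $\tfrac32-\tfrac37=\tfrac{15}{14}$.

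The step I expect to be the real obstacle is the convexity estimate. One must make the bound for $\av{E(\tfrac34+it,I,\omega,\phi)}$ genuinely uniform, which means carrying out the Stirling analysis of the gamma-factor ratio simultaneously for the three possible spectral types of $\phi$ (constant, unitary Eisenstein, and Maass cusp form) and keeping track of how the harmonic $\omega$ enters the archimedean data, while checking that in each case the $t$-aspect exponent on the critical line is $\tfrac34$. By contrast, the passage from this convexity bound to the summatory estimate, carried out above, is the routine Perron-and-shift argument. I would also note that replacing mere convexity by the oscillation of the dual (Voronoi) expansion should lower the exponent below $\tfrac{15}{14}$, but this is neither available for general $\omega,\phi$ nor needed for \eqref{averagethm}.
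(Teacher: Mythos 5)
Your architecture (main term from the pole at $s=\tfrac32$, error from the critical line $\re(s)=\tfrac34$) is the right shape, but the convexity estimate that everything rests on is not available, and this is a genuine gap. Theorem \ref{Maassthm} does \emph{not} assert that $E(s,I,\omega,\phi)$ is holomorphic in the strip apart from the single possible pole at $s=\tfrac32$; it asserts this for the product $\Xi(s,\omega,\phi)=E(s,I,\omega,\phi)\Lambda^{*}(s,\phi)$. Writing $E=\Xi/\Lambda^{*}$, you see that $E$ in general has poles at the zeros of the finite $L$-factor inside $\Lambda^{*}$, i.e.\ at the points where $L(2s-\tfrac12,\phi)=0$ (resp.\ where $\zeta(2s-\lambda)\zeta(2s-1+\lambda)=0$ when $\phi$ is constant or Eisenstein); these fill the substrip $\tfrac14<\re(s)<\tfrac34$, exactly as the classical $\SL_2$ Eisenstein series has poles at half the nontrivial zeros of $\zeta$. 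So Phragm\'en--Lindel\"of ``applied to the holomorphic function $E$ away from its single pole'' across $0\le\re(s)\le\tfrac32$ is invalid. (Your contour shift itself is fine: by nonvanishing of the relevant $L$-functions on the $1$-line these poles lie strictly to the left of $\re(s)=\tfrac34$.) Moreover, your conductor count extrapolates from the wrong extremal case: the Epstein zeta function (trivial $\omega,\phi$) has a single factor $\Gamma(s)$ and center bound $\av{t}^{3/4+\eps}$, but for cuspidal or Eisenstein $\phi$ the object that is actually entire is $f(s)=E(s,I,\omega,\phi)L(2s-\tfrac12,\phi)$, whose functional equation carries \emph{two} factors $\Gamma(s+\tfrac{\lambda}{2}-\tfrac12)\Gamma(s-\tfrac{\lambda}{2})$; Stirling gives edge growth $\av{t}^{3+O(\eps)}$ and hence the convexity bound $\av{f(\tfrac34+it)}\ll\av{t}^{3/2+\eps}$. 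Dividing back by $L(1+2it,\phi)\gg_{\eps}(1+\av t)^{-\eps}$ yields only $\av{E(\tfrac34+it)}\ll(1+\av{t})^{3/2+\eps}$. Your claimed $(1+\av t)^{3/4+\eps}$ is therefore not a convexity bound at all but a genuine (Lindel\"of-quality for $f$) subconvexity assertion, which is precisely what is unknown for general $\omega,\phi$.

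Quantitatively this kills the exponent: with the provable bound, your own balancing becomes $X^{3/2+\eps}/T+X^{3/4}T^{3/2+\eps}$, optimized at $T=X^{3/10}$, giving $O(X^{6/5+\eps})$, which falls short of $X^{15/14+\eps}$; to reach $\tfrac{15}{14}$ by Perron-and-shift you need exactly the $\tfrac34$ exponent you assumed. The paper reaches $\tfrac{15}{14}$ by a different mechanism: it applies the Chandrasekharan--Narasimhan theorem to the entire series $f(s)=E(s,I,\omega,\phi)L(2s-\tfrac12,\phi)=\sum_n c_n n^{-s}$ (with $M=\tfrac32$, $A=2$, $\beta=\tfrac32+\eps$), whose Riesz-mean/differencing proof is strictly stronger than Perron plus convexity --- the same phenomenon by which that theorem gives the exponent $\tfrac13$ rather than $\tfrac12$ in the divisor problem --- and then recovers $\sum_{n\le X}a_n$ from $\sum_{n\le X}c_n$ by M\"obius inversion through $1/L(2s-\tfrac12,\phi)=\sum_n h_n n^{-s}$, using $\av{h_n}\ll n^{1/2}$. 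In short: the missing idea is that one must first multiply by the finite $L$-function to obtain an entire series with a pure gamma-factor functional equation, and then use machinery that goes beyond convexity; neither step can be bypassed by the Perron argument as you set it up.
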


\begin{proof}
We fix $\omega, \phi$ from the beginning. By lattice points counting, the theorem is obvious when $\omega$ and $\phi$ are trivial. Next we assume that this is not the case. To be explicit, let's assume $\phi$ is a Maass cusp form. Other cases are similar. We invoke a theorem, which is a general bound about sums of coefficients of Dirichlet series. We state the theorem in the form we need.

\begin{thm}[A special case of Theorem 4.1 in \cite{chandrasekharan1962functional}]
Assuming we have two Dirichlet series $f (s) = \sum_{n=1}^{\infty} \frac{c_n}{n^s}$ and $g (s) = \sum_{n=1}^{\infty} \frac{d_n}{n^s}$ and a product of Gamma factors $\Delta (s) = \prod_{\nu = 1}^N \Gamma (\alpha_{\nu} s + \beta_{\nu})$, satisfying the functional equation
\begin{equation}
\Delta(s) f (s) = \Delta (M-s) g(M-s)
\end{equation}
for some $M>0$. Also assume $f$ is entire. Then
\begin{equation}\label{KNestimate}
\sum_{n \leq X} c_n = O(X^{\frac{M}{2}- \frac{1}{4A}+2A\eta u}) + O(\sum_{X<n\leq X'} |c_n|).
\end{equation}
In (\ref{KNestimate}), $A = \sum_{\nu = 1}^N \alpha_{\nu} \geq 1$, $X' = X+O(X^{1-\eta-\frac{1}{2A}})$, $u = \beta - \frac{M}{2} -\frac{1}{4A}$ where $\beta$ satisfies $\sum_{n=1}^{\infty} \frac{|d_n|}{n^{\beta}} < \infty$, $\eta$ is any positive number at our disposal.
\end{thm}

Take $f = g = E(s, I, \omega, \phi)L(2s-\frac{1}{2}, \phi)$. Maass proved $E(s, I, \omega, \phi)L(2s-\frac{1}{2}, \phi)$ is entire \cite{Maass1971}, which enables us to do the substitution. We see here $M = \frac{3}{2}$. By Theorem \ref{Maassthm}, $E(s, I, \omega, \phi)L(2s-\frac{1}{2}, \phi)$ has a functional equation with two Gamma factors, meaning $A=2$.  Finally we determine $\beta$. Assume $L(s, \phi) = \sum_{n=1}^{\infty} \frac{b_n}{n^s}$. Then $L(2s-\frac{1}{2}, \phi) = \sum_{n=1}^{\infty} \frac{b_n \sqrt{n}}{n^{2s}}$. Hence $|c_n| = |d_n| \leq \sum_{m^2 k = n} |a_k| |b_m| \sqrt{m} \leq \sum_{m^2 k = n} k^{\frac{1}{2} + \eps} m \ll n^{\frac{1}{2} + \eps}$. We can take $\beta = \frac{3}{2} + \eps$ and thus $u = \frac{5}{8} + \eps$.

We conclude that $\sum_{n \leq X} c_n = O(X^{\frac{5}{8}+ (\frac{5}{2} + \eps) \eta}) + O(X^{\frac{5}{4} + \eps - \eta})$. After an optimization we get $\sum_{n \leq X} c_n = O(X^{\frac{15}{14} + \eps})$. Now if $\frac{1}{L(s, \phi)} = \sum_{n=1}^{\infty} \frac{h_n}{n^s}$, from the Euler product of $L(s, \phi)$ we easily get $|h_n| \ll n^{\frac{1}{2}}$. Hence

\begin{equation}
|\sum_{n \leq X} a_n| = |\sum_{n \leq X} h_n \sqrt{n} \sum_{mn^2 \leq X} c_m| \ll |\sum_{n \leq X} |h_n| \sqrt{n} (\frac{X}{n^2})^{\frac{15}{14}+\eps}|\ll X^{\frac{15}{14} + \eps}.
\end{equation}
\end{proof}

We end the discussion with some further remarks concerning this approach. All the dependencies on $\omega$ and $\phi$ are polynomial in terms of their eigenvalues and can be made explicit by a slightly more careful treatment. It is then standard to do a spectral decomposition (of the smoothed characteristic function of the underlying domain) and take the Weyl law (see \cite{selberg1991collected}) into account, to get an estimate of the remainder term needed for the joint equidistribution result ``in a big ball'' --- pairs $(\frac{\mathbf{v}}{\|\mathbf{v}\|}, z_{\mathbf{v}})$ get jointly equidistributed for $\|\mathbf{v}\| \leq n$ when $n \rightarrow \infty$. We can also get the joint equidistribution ``in a thinner shell'' (for some $X^{1-\theta} < \|\mathbf{v}\| < X$) where $\theta > 0$ depends on the remainder term we have. But the conjectured joint equidistribution result ``on every sphere of a reasonable radius'' (Conjecture \ref{Conj:ASL2 conjecture}) requires new ideas.

This type of (quantitative) bounds for the remainder term of the averaged size of $a_n$ were also obtained by elementary methods by Schmidt \cite{WSchmidt-sublattices}, in more general cases. Our approach will also have corresponding generalization for higher dimensional settings.

\begin{bibdiv}
\begin{biblist}

\bib{AES2014}{article}{
      author={Aka, Menny},
      author={Einsiedler, Manfred},
      author={Shapira, Uri},
       title={Integer points on spheres and their orthogonal grids},
        date={2014},
     journal={preprint},
}

\bib{BT73}{article}{
      author={Borel, Armand},
      author={Tits, Jacques},
       title={Homomorphismes ``abstraits'' de groupes alg\'ebriques simples},
        date={1973},
        ISSN={0003-486X},
     journal={Ann. of Math. (2)},
      volume={97},
       pages={499\ndash 571},
      review={\MR{0316587 (47 \#5134)}},
}

\bib{cassels:RQF}{book}{
      author={Cassels, J. W.~S.},
       title={Rational quadratic forms},
      series={London Mathematical Society Monographs},
   publisher={Academic Press Inc. [Harcourt Brace Jovanovich Publishers]},
     address={London},
        date={1978},
      volume={13},
        ISBN={0-12-163260-1},
}

\bib{chandrasekharan1962functional}{article}{
      author={Chandrasekharan, K},
      author={Narasimhan, Raghavan},
       title={Functional equations with multiple gamma factors and the average
  order of arithmetical functions},
        date={1962},
     journal={Annals of Mathematics},
       pages={93\ndash 136},
}

\bib{cohen1993course}{book}{
      author={Cohen, Henri},
       title={A course in computational algebraic number theory},
   publisher={Springer},
        date={1993},
      volume={138},
}

\bib{davenport1967multiplicative}{article}{
      author={Davenport, Harold},
       title={Multiplicative number theory},
        date={1967},
     journal={Englewood Cliffs, NJ},
}

\bib{Duke88}{article}{
      author={Duke, W.},
       title={Hyperbolic distribution problems and half-integral weight {M}aass
  forms},
        date={1988},
        ISSN={0020-9910},
     journal={Invent. Math.},
      volume={92},
      number={1},
       pages={73\ndash 90},
         url={http://dx.doi.org/10.1007/BF01393993},
      review={\MR{931205 (89d:11033)}},
}

\bib{EL2014}{article}{
      author={Einsiedler, Manfred},
      author={Lindenstrauss, Elon},
       title={Joinings of higher rank torus actions on homogeneous spaces},
        date={2015},
     journal={preprint},
}

\bib{ELMVDuke}{article}{
      author={Einsiedler, Manfred},
      author={Lindenstrauss, Elon},
      author={Michel, Philippe},
      author={Venkatesh, Akshay},
       title={Distribution of periodic torus orbits on homogeneous spaces},
        date={2009},
        ISSN={0012-7094},
     journal={Duke Math. J.},
      volume={148},
      number={1},
       pages={119\ndash 174},
         url={http://dx.doi.org/10.1215/00127094-2009-023},
      review={\MR{2515103 (2010k:37004)}},
}

\bib{EMLV11}{article}{
      author={Einsiedler, Manfred},
      author={Lindenstrauss, Elon},
      author={Michel, Philippe},
      author={Venkatesh, Akshay},
       title={Distribution of periodic torus orbits and {D}uke's theorem for
  cubic fields},
        date={2011},
        ISSN={0003-486X},
     journal={Ann. of Math. (2)},
      volume={173},
      number={2},
       pages={815\ndash 885},
         url={http://dx.doi.org/10.4007/annals.2011.173.2.5},
      review={\MR{2776363 (2012h:37006)}},
}

\bib{EMSS}{unpublished}{
      author={Einsiedler, M.},
      author={Mozes, S.},
      author={Shah, S.},
      author={Shapira, U.},
       title={Equidistribution of primitive rational points on expanding
  horospheres},
        note={Preprint},
}

\bib{ERW2015}{article}{
      author={Einsiedler, Manfred},
      author={R\"uhr, Ren\'e},
      author={Wirth, Philipp},
       title={Effective equidistribution of shapes of orthogonal lattices},
        date={2014},
     journal={preprint},
}

\bib{EMV2010linnik}{article}{
      author={Ellenberg, Jordan~S.},
      author={Michel, Philippe},
      author={Venkatesh, Akshay},
       title={Linnik's ergodic method and the distribution of integer points on
  spheres},
        date={2010},
     journal={arXiv preprint arXiv:1001.0897},
}

\bib{EV08}{article}{
      author={Ellenberg, Jordan~S.},
      author={Venkatesh, Akshay},
       title={Local-global principles for representations of quadratic forms},
        date={2008},
        ISSN={0020-9910},
     journal={Invent. Math.},
      volume={171},
      number={2},
       pages={257\ndash 279},
         url={http://dx.doi.org/10.1007/s00222-007-0077-7},
      review={\MR{2367020 (2008m:11081)}},
}

\bib{Gauss1801}{book}{
      author={Gauss, Carl~Friedrich},
       title={Disquisitiones arithmeticae},
   publisher={Springer-Verlag, New York},
        date={1986},
        ISBN={0-387-96254-9},
        note={Translated and with a preface by Arthur A. Clarke, Revised by
  William C. Waterhouse, Cornelius Greither and A. W. Grootendorst and with a
  preface by Waterhouse},
      review={\MR{837656 (87f:01105)}},
}

\bib{gauss1889untersuchungen}{book}{
      author={Gauss, Carl~Friedrich},
      author={Maser, Hermann},
       title={Untersuchungen {\"u}ber h{\"o}here arithmetik},
   publisher={J. Springer},
        date={1889},
}

\bib{HarcosThesis}{unpublished}{
      author={Harcos, Gergely},
       title={Subconvex bounds for automorphic l-functions and applications},
        note={This is an unpublished dissertation available at
  http://www.renyi.hu/~gharcos/ertekezes.pdf},
}

\bib{MH2006}{article}{
      author={Harcos, Gergely},
      author={Michel, Philippe},
       title={The subconvexity problem for {R}ankin-{S}elberg {$L$}-functions
  and equidistribution of {H}eegner points. {II}},
        date={2006},
        ISSN={0020-9910},
     journal={Invent. Math.},
      volume={163},
      number={3},
       pages={581\ndash 655},
         url={http://dx.doi.org/10.1007/s00222-005-0468-6},
      review={\MR{2207235 (2007j:11063)}},
}

\bib{Iwaniec87}{article}{
      author={Iwaniec, Henryk},
       title={Fourier coefficients of modular forms of half-integral weight},
        date={1987},
        ISSN={0020-9910},
     journal={Invent. Math.},
      volume={87},
      number={2},
       pages={385\ndash 401},
         url={http://dx.doi.org/10.1007/BF01389423},
      review={\MR{870736 (88b:11024)}},
}

\bib{Linnik68}{book}{
      author={Linnik, Yu.~V.},
       title={Ergodic properties of algebraic fields},
      series={Translated from the Russian by M. S. Keane. Ergebnisse der
  Mathematik und ihrer Grenzgebiete, Band 45},
   publisher={Springer-Verlag New York Inc., New York},
        date={1968},
      review={\MR{0238801 (39 \#165)}},
}

\bib{maass1956}{article}{
      author={Maass, Hans},
       title={Spherical functions and quadratic forms},
        date={1956},
     journal={J. Indian Math. Soc},
      volume={20},
       pages={117\ndash 162},
}

\bib{Maass1959}{article}{
      author={Maass, Hans},
       title={{\"Uber die Verteilung der zweidimensionalen Untergitter in einem
  euklidischen Gitter}},
        date={1959},
     journal={Mathematische Annalen},
      volume={137},
       pages={319\ndash 327},
}

\bib{Maass1971}{book}{
      author={Maass, Hans},
       title={{Siegel's modular forms and Dirichlet series}},
      series={Lecture Notes in Mathematics, Vol. 216},
   publisher={Springer-Verlag, Berlin-New York},
        date={1971},
}

\bib{Marklof-Frobenius}{article}{
      author={Marklof, Jens},
       title={The asymptotic distribution of {F}robenius numbers},
        date={2010},
     journal={Invent. Math.},
      volume={181},
      number={1},
       pages={179\ndash 207},
}

\bib{MV2006ICM}{incollection}{
      author={Michel, Philippe},
      author={Venkatesh, Akshay},
       title={Equidistribution, {$L$}-functions and ergodic theory: on some
  problems of {Y}u.\ {L}innik},
        date={2006},
   booktitle={International {C}ongress of {M}athematicians. {V}ol. {II}},
   publisher={Eur. Math. Soc., Z\"urich},
       pages={421\ndash 457},
      review={\MR{2275604 (2008g:11085)}},
}

\bib{PR94}{book}{
      author={Platonov, Vladimir},
      author={Rapinchuk, Andrei},
       title={Algebraic groups and number theory},
      series={Pure and Applied Mathematics},
   publisher={Academic Press Inc.},
     address={Boston, MA},
        date={1994},
      volume={139},
        ISBN={0-12-558180-7},
        note={Translated from the 1991 Russian original by Rachel Rowen},
      review={\MR{MR1278263 (95b:11039)}},
}

\bib{WSchmidt-sublattices}{article}{
      author={Schmidt, Wolfgang~M.},
       title={The distribution of sublattices of {${\bf Z}^m$}},
        date={1998},
     journal={Monatsh. Math.},
      volume={125},
      number={1},
       pages={37\ndash 81},
}

\bib{selberg1991collected}{book}{
      author={Selberg, Atle},
       title={Harmonic analysis, in ``collected papers'', vol. i},
   publisher={Springer-Verlag},
        date={1991},
}

\bib{terras1982automorphic}{article}{
      author={Terras, Audrey},
       title={On automorphic forms for the general linear group},
        date={1982},
     journal={JOURNAL OF MATHEMATICS},
      volume={12},
      number={1},
}

\bib{venkov1970elementary}{book}{
      author={Venkov, Boris~Alekseevich},
       title={Elementary number theory},
   publisher={Wolters-Noordhoff},
        date={1970},
}

\end{biblist}
\end{bibdiv}

\end{document}